\documentclass{file}

\usepackage{pat}
\usepackage[utf8]{inputenc}
\usepackage{amsthm,amsmath,amssymb}
\usepackage{mathtools}
\usepackage{thmtools}
\usepackage[table]{xcolor}
\usepackage{hyperref}
\usepackage{todonotes}
\makeatletter
\@ifpackageloaded{cleveref}{}{\usepackage[nameinlink,noabbrev]{cleveref}}
\makeatother
\usepackage[longnamesfirst,numbers,sort&compress]{natbib}

\definecolor{brightmaroon}{rgb}{0.76,0.13,0.28}
\providecommand{\defin}[1]{\emph{\textcolor{brightmaroon}{#1}}}
\newcommand{\bd}{\partial}
\newcommand{\Om}{\Omega}
\newcommand{\Aut}{\mathsf{Aut}}
\newcommand{\stab}{\mathsf{Stab}}
\newcommand{\Ind}{\operatorname{Ind}}

\makeatletter
\@ifundefined{rem}{}{}
\makeatother
\newtheorem{ques}[thm]{Question}

\title{\MakeUppercase{Hyperfiniteness of Boundary Actions via Tree Decompositions}}
\author{
Chris Karpinski\thanks{Department of Mathematics and Statistics, McGill University, Montreal, Canada}
\qquad
Bobby Miraftab\thanks{School of Computer Science, Carleton University, Ottawa, Canada.}
}
\date{}

\begin{document}
\maketitle

\begin{abstract}
We study conditions for a countable group acting on a connected locally finite hyperbolic graph to induce a hyperfinite orbit equivalence relation on the Gromov boundary of the graph in terms of tree-decompositions of the graph. 

We prove that for a connected locally finite hyperbolic graph $X$ equipped with an action of a countable group $G$, if $(T, \beta)$ is a $G$-invariant tree-decomposition of $X$ such that each bag induces a connected subgraph $X_t$ of $X$ for each $t \in V(T)$, each adhesion set is finite and such that there are only finitely many $G$-orbits of edges of $T$, then the orbit equivalence relation of $G$ acting on the Gromov boundary $\partial X$ is hyperfinite provided the orbit equivalence relation of $G$ acting on $\partial T$ is hyperfinite and the orbit equivalence relations of the bag stabilizers acting on $\partial X_t$ are all hyperfinite. 
We show that the converse also holds if $(T, \beta)$ satisfies the additional property that each adhesion set distinguishes at least two ends of $X$. 
\end{abstract}

\section{Introduction}

Hyperfiniteness is one of the fundamental regularity properties of countable
Borel equivalence relations.  A countable Borel equivalence relation is
\defin{Borel hyperfinite} if it is an increasing union of finite Borel
equivalence relations.  
In other words, it has the same Borel complexity as an
orbit relation generated by a Borel action of $\mathbb Z$,
\cite{DoughertyJacksonKechris1994,JacksonKechrisLouveau2002,KechrisMiller2004}.
A basic problem is to determine which naturally occurring group actions induce
hyperfinite orbit equivalence relations.

Boundary actions of groups acting on hyperbolic spaces form an important class
of examples.  If $G$ is a hyperbolic group, then its action
on its Gromov boundary induces a Borel hyperfinite orbit equivalence relation,
by a theorem of \citet{MarquisSabok2020}. It is worth mentioning that Naryshkin and Vaccaro
gave a shorter proof and established the stronger conclusion that this action
has finite Borel asymptotic dimension \cite{NaryshkinVaccaro2025}.  Related
hyperfiniteness results are known for several other boundary actions, see
\cite{HuangSabokShinko2020,PrzytyckiSabok2021,
Karpinski2025RelativelyHyperbolic,Oyakawa2024Acylindrical,
KunnawalkamOyakawaShinkoSpaas2024,KarpinskiOsajdaOyakawa2026,
Oyakawa2026VirtuallySpecial}.

In this paper, we study hyperfiniteness of boundary actions of countable groups  on the Gromov boundaries of connected locally finite hyperbolic graphs.  
Let $X$ be a connected locally finite hyperbolic graph and $G \leq \Aut(X)$ be countable.
The action of $G$ on $X$ induces a continuous
action on the Gromov boundary $\bd X$, and hence a countable Borel orbit
equivalence relation $E_G^{\bd X}$.  We are mainly interested in
cocompact (quasi-transitive) actions, but our main results require no such cocompactness assumption.

We note that the corresponding boundary action need not be hyperfinite for an arbitrary
transitive action, even when the graph is a regular tree.  Indeed, let $T_3$ be
the $3$-regular tree and fix an edge $vw$.  After deleting this edge, let $C$
be the component containing $w$, rooted at $w$.  Thus $C$ is the rooted binary
tree, and
\[
A:=\{\xi\in\bd T_3:
\text{the ray from $v$ to $\xi$ begins with the edge $vw$}\}
\]
is a clopen subset of $\bd T_3$ naturally identified with $\bd C$.  By
\cite[Theorem~4]{AbertVirag2005}, there exist $a,b\in\Aut(C)$ such that
$H:=\langle a,b\rangle\cong\mathbb F_2$ and every non-trivial element of $H$
fixes only finitely many vertices of $C$.  The action
$H\curvearrowright\bd C$ is therefore free because every non-trivial element of $H$ only fixes finitely many vertices, including $w$. 
In addition, it follows from \cite{Grigorchuk_2016} that the uniform Bernoulli probability measure on the boundary of a regular tree is invariant under its full automorphism group. Therefore, $H$ preserves the uniform Bernoulli
probability measure on $\partial C$.  Since $\mathbb F_2$ is nonamenable, we infer that $E_H^{\partial C}$ is not
measure-hyperfinite and hence is not Borel hyperfinite; see
\cite[Proposition~2.5(ii)]{JacksonKechrisLouveau2002}.  Extend every element of
$H$ to an automorphism of $T_3$ by making it act trivially outside $C$.  Let
$\Lambda:=C_2*C_2*C_2$ act simply transitively on $T_3$, and put
$G:=\langle H,\Lambda\rangle$.  Then $G$ is finitely generated and acts
transitively on $T_3$, while $E_H^{\partial C}$ is a non-hyperfinite Borel subequivalence relation of
${E_G^{\bd T_3}}_{|A}$.  Hence $E_G^{\bd T_3}$ is not Borel hyperfinite, since Borel sub-equivalence relations of hyperfinite Borel equivalence relations are again hyperfinite.

Our aim is to develop local-to-global criteria for hyperfiniteness of boundary actions on connected locally finite hyperbolic graphs. 
For example, consider a free product with amalgamation $G=A \ast_C B$, where $C$ is finite. Suppose that $A$ and $B$ are hyperbolic. It then follows that $G$ is hyperbolic, since $C$ is finite. Choose finite symmetric generating sets $S_A$ and $S_B$ containing
$C-\{1\}$, and put
\[
X:=\operatorname{Cay}(G,S_A\cup S_B),\qquad
X_A:=\operatorname{Cay}(A,S_A),\qquad
X_B:=\operatorname{Cay}(B,S_B).
\]


Then by \cite{MarquisSabok2020}, we know that $E_G^{\bd X}$ is Borel hyperfinite and $E_A^{\bd X_A}$ and $E_B^{\bd X_B}$ are hyperfinite.  In addition, denoting $T$ the Bass--Serre tree of the amalgam $ G=A\ast_C B$, we have that $E_G^{\partial T}$ is hyperfinite by \cite{KunnawalkamOyakawaShinkoSpaas2024}. We note that $X$ can be decomposed in a tree-like manner in terms of $X_A$ and $X_B$.

One can ask the more general question of hyperfiniteness of boundary actions on hyperbolic graphs admitting a decomposition into subgraphs arranged in a tree-like manner. In structural graph theory, this notion is called a \emph{tree-decomposition}.

A \defin{tree-decomposition} of a graph $X$ is a pair $(T,\beta)$,
where $T$ is a tree and $\beta\colon V(T)\longrightarrow\mathcal P(V(X))$
assigns to every node $t\in V(T)$ a set $\beta(t)$, called the \defin{bag at $t$}, such that:
\begin{enumerate}
\item $V(X)=\bigcup_{t\in V(T)}\beta(t)$;
\item every edge of $X$ has both endpoints in some bag $\beta(t)$;
\item for every $x\in V(X)$, the set $T_x:=\{t\in V(T):x\in\beta(t)\}$
is a connected subtree of $T$.
\end{enumerate}

We call the vertices of $T$ \defin{nodes}, in order to distinguish them
from the vertices of $X$. The \defin{adhesion set} of an edge $e=uv\in E(T)$ is $S_e:=\beta(u)\cap\beta(v)$.
The decomposition has \defin{finite adhesion} if every $S_e$ is finite. 
Let $G\leq\Aut(X)$.  The tree-decomposition $(T,\beta)$ is
\defin{$G$-invariant} if $G$ acts on $T$ by tree automorphisms and
\[
\beta(gt)=g\beta(t)
\qquad
\text{for every }g\in G\text{ and }t\in V(T).
\]
Equivalently, the assignment $t\mapsto\beta(t)$ is $G$-equivariant.
Tree-decompositions which are $G$-invariant are also called \defin{$G$-canonical}, see \cite{MiraftabRuhmann2022,MiraftabStavropoulos2021}.
For every $t\in V(T)$, the node stabilizer $G_t:=\stab_G(t)$ preserves $\beta(t)$ setwise and therefore acts on the induced subgraph of $\beta(t)$, which we denote by $X_t$.

Let $X$ be a graph and $(T, \beta)$ a tree-decomposition of $X$. For an edge $e=uv\in E(T)$, let $T_u^e$ and $T_v^e$ be the components of
$T-e$ containing $u$ and $v$, respectively, and put
\[
V_u^e:=\bigcup_{s\in V(T_u^e)}\beta(s),
\qquad
V_v^e:=\bigcup_{s\in V(T_v^e)}\beta(s).
\]

We will use the following lemma throughout the paper. It is a straightforward consequence of the axioms of a tree-decomposition.

\begin{lem}
\label{lem:tree-edge-separation}
For every $e=uv\in E(T)$, $V_u^e\cup V_v^e=V(X)$,
$V_u^e\cap V_v^e=S_e$,
and no edge of $X$ joins $V_u^e-S_e$ to $V_v^e-S_e$.  In particular, if
$S_e$ is finite, then the two sides $V_u^e$ and $V_v^e$ of $e$ are separated in $X$ by $S_e$.
\end{lem}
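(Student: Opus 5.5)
The plan is to derive all three assertions directly from the three axioms of a tree-decomposition, using nothing more than the connectivity of the subtrees $T_x$ and the fact that removing an edge $e=uv$ from $T$ leaves exactly two components, $T_u^e$ and $T_v^e$.

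\emph{Covering.} Since $V(T)=V(T_u^e)\sqcup V(T_v^e)$, axiom~(1) gives
\[
V(X)=\bigcup_{t\in V(T)}\beta(t)=V_u^e\cup V_v^e .
\]

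\emph{Intersection.} One inclusion is trivial: $S_e=\beta(u)\cap\beta(v)$, with $\beta(u)\subseteq V_u^e$ and $\beta(v)\subseteq V_v^e$. For the converse, take $x\in V_u^e\cap V_v^e$. Then $T_x$ contains a node $s\in T_u^e$ and a node $s'\in T_v^e$. By axiom~(3), $T_x$ is connected, so it contains the unique $s$--$s'$ path in $T$. Every path from $T_u^e$ to $T_v^e$ passes through the edge $e$, so $u,v\in T_x$. Hence $x\in\beta(u)\cap\beta(v)=S_e$.

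\emph{No crossing edges.} Suppose $xy\in E(X)$ with $x\in V_u^e-S_e$ and $y\in V_v^e-S_e$. By axiom~(2), some bag $\beta(t)$ contains both $x$ and $y$. The node $t$ lies in $T_u^e$ or in $T_v^e$. If $t\in T_u^e$, then $y\in V_u^e\cap V_v^e=S_e$, a contradiction. If $t\in T_v^e$, then symmetrically $x\in S_e$, again a contradiction.

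\emph{Separation.} Any path in $X$ from $V_u^e$ to $V_v^e$ must meet $S_e$. To see this, suppose the path avoids $S_e$. By the covering step, every vertex on it lies in $V_u^e-S_e$ or in $V_v^e-S_e$, and these two sets are disjoint by the intersection step. The path starts in the first set and ends in the second, so it must use an edge joining them, which the previous step rules out. Thus $S_e$ separates $V_u^e$ from $V_v^e$. Finiteness of $S_e$ plays no role in this argument; it only makes the separator finite.

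The only step with any real content is the intersection step, where connectivity of $T_x$ forces $x$ to lie in both endpoint bags. The rest is bookkeeping, so I do not expect a genuine obstacle.
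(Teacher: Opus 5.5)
Your proof is correct. It is the routine derivation from the three tree-decomposition axioms that the paper has in mind; the paper states the lemma without proof, calling it a straightforward consequence of the axioms. As you note, the only substantive step is using connectivity of $T_x$ to force $u,v\in T_x$ whenever $x$ lies on both sides of $e$.
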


Tree-decompositions are among the fundamental tools of structural
and algorithmic graph theory.  The notions of tree-decomposition and
tree-width were introduced, under different names, by Halin in 1976
\cite{Halin1976,Diestel2025}.  They were reintroduced and developed
systematically by Robertson and Seymour in the Graph Minors project in
the 1980s
\cite{RobertsonSeymour1984,RobertsonSeymour1986}, and have since become
standard tools both in graph structure theory and in algorithms for
graphs of bounded tree-width.  

Now our main theorem explores local-to-global criteria for boundary actions on connected locally finite hyperbolic graphs $X$ via a tree-decomposition $(T, \beta)$. In other words, we relate hyperfiniteness of the boundary action on $\partial X$ to the hyperfiniteness of the boundary action on $\partial T$ and the boundary actions of the bag stabilizers on the boundaries of the bags. 

\medskip
\noindent\textbf{Main theorem.}
Let $X$ be a connected locally finite hyperbolic graph, let
$G\leq\Aut(X)$ be countable, and let $(T,\beta)$ be a $G$-invariant
tree-decomposition such that every bag induces a connected subgraph, the
adhesion is finite, and $G\backslash E(T)$ is finite.  Put
$X_t:=X[\beta(t)]$ and $G_t:=\stab_G(t)$.  If
$E_G^{\bd T}$ and all $E_{G_t}^{\bd X_t}$ are Borel hyperfinite, then
$E_G^{\bd X}$ is Borel hyperfinite.  For every edge $e=uv \in E(T)$, denoting $T_u^e$ (respectively, $T_v^e$) the components of $T - e$ containing $u$ (respectively, $v$), if each of the sets $V_u^e :=\bigcup_{s \in V(T_u^e)} \beta(s)$ and $V_v^e:=\bigcup_{s \in V(T_v^e)} \beta(s)$ contain a tail of every ray in any given end of $X$, then the converse also holds.
\medskip

Every end of $X$ orients the edges of $T$. Indeed, for every edge $e = uv \in E(T)$, by \Cref{lem:tree-edge-separation}, the complement of the adhesion set $S_e$ separates $X$ into the two disjoint sets $V_u^e - S_e$ and $V_v^e - S_e$. For any end $\omega$ of $X$, because $S_e$ is finite, exactly one of $V_u^e$ or $V_v^e$ contains a tail of every ray in $\omega$. The edge $e$
is oriented towards $u$ or $v$ depending on which of $V_u^e$ or $V_v^e$ contains a tail of every ray in the given end $\omega$.  The resulting
orientation has either a unique sink $t \in V(T)$ (in which case we say that $\omega$ is \defin{captured} by $t$) or points towards a unique end $\eta$ of
$T$ (in which case we say that $\omega$ is \defin{captured} by $\eta$).  Since every point in the Gromov boundary $\partial X$ of $X$ corresponds to a unique end of $X$, we obtain a decomposition
\[
\bd X
=
\bd_T X
\sqcup
\bigsqcup_{t\in V(T)}\bd_tX,
\]
where $\bd_T X$ is the set of points in $\partial X$ whose corresponding end is captured by an end of $T$, and $\bd_tX$ is the set of points in $\partial X$ whose corresponding end is captured the vertex $t \in V(T)$. We crucially use the above decomposition of $\partial X$ to relate $E_G^{\partial X}$ to $E_G^{\partial T}$ and $E_{G_t}^{\partial X_t}$ for each node $t$. We note that our main theorem applies to prove hyperfiniteness of boundary actions in situations not covered in the literature \cite{HuangSabokShinko2020,PrzytyckiSabok2021,
Karpinski2025RelativelyHyperbolic,Oyakawa2024Acylindrical,
KunnawalkamOyakawaShinkoSpaas2024,KarpinskiOsajdaOyakawa2026,
Oyakawa2026VirtuallySpecial}; see \Cref{prop:nonacylindrical-example}. 

The existence of tree-decompositions as in the main theorem for general connected locally finite graphs is not always guaranteed. However, through the framework of accessibility, we deduce the existence of such tree-decompositions for \emph{cocompact} connected locally finite hyperbolic graphs.

Accessibility theory for groups originates in the work of Wall \cite{Wall_71} and was developed by
Dunwoody \cite{Dunwoody1985}.  For cocompact graphs it was initiated by
Thomassen and Woess \cite{ThomassenWoess1993} and developed further in
\cite{HamannLehnerMiraftabRuehmann,HamannAccessibility,
CarmesinHamannMiraftab2022}. We note that cocompact locally finite hyperbolic graphs are accessible by \cite{HamannAccessibility}.

\section{Descriptive set theory and Borel equivalence relations}

All groups in this paper are countable.  If a group $G$ acts by Borel
automorphisms on a standard Borel space $Y$, we write $E_G^Y$ for the orbit
equivalence relation
\[
y\mathrel{E_G^Y}y'
\quad\Longleftrightarrow\quad
y'=gy\text{ for some }g\in G.
\]

We begin with the standard Lusin--Souslin theorem.

\begin{lem}{\rm\cite[Theorem~15.2]{Kechris}}
\label{lem:lusin-souslin}
Let $Y$ and $Z$ be standard Borel spaces, and let $f\colon Y\to Z$ be an
injective Borel map.  Then $f(Y)$ is Borel in $Z$, and
$f\colon Y\to f(Y)$ is a Borel isomorphism.
\end{lem}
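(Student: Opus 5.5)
The plan is the classical argument via the Lusin separation theorem for analytic sets, reduced to a continuous injection defined on a closed subset of Baire space $\mathcal N=\mathbb N^{\mathbb N}$. First I reduce to that situation. Fix Polish topologies on $Y$ and $Z$ generating the given Borel structures. Since $f$ is Borel, I refine the Polish topology on $Y$, keeping the same Borel sets, so that $f$ becomes continuous; this uses the standard fact that countably many Borel sets can be made clopen. Then I take a closed $F\subseteq\mathcal N$ and a continuous bijection $\pi\colon F\to Y$. Replacing $f$ by $f\circ\pi$, it suffices to show: if $F\subseteq\mathcal N$ is closed and $f\colon F\to Z$ is a continuous injection, then $f(F)$ is Borel. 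The second assertion is then formal. For Borel $A\subseteq Y$, the restriction $f|_A$ is an injective Borel map on the standard Borel space $A$, so the first part gives that $f(A)$ is Borel. Hence $(f^{-1})^{-1}(A)=f(A)$ is Borel, and $f\colon Y\to f(Y)$ is a Borel isomorphism.

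For the core case, let $F_s=F\cap N_s$ for $s\in\mathbb N^{<\mathbb N}$, where $N_s$ is the basic clopen set. Each $f(F_s)$ is analytic. By injectivity, for distinct $s,s'$ of the same length the sets $f(F_s)$ and $f(F_{s'})$ are disjoint. I apply the Lusin separation theorem, in its countable form (pairwise disjoint analytic sets are separated by pairwise disjoint Borel sets), one level at a time. By recursion on $|s|$ I define Borel sets $B_s$ such that:
\begin{itemize}
\item $f(F_s)\subseteq B_s\subseteq \overline{f(F_s)}$;
\item $B_{s^\frown i}\subseteq B_s$;
\item $B_s\cap B_{s'}=\emptyset$ for distinct $s,s'$ of the same length.
\end{itemize}
To arrange the containments, I intersect the separating sets with $\overline{f(F_s)}$ and with the parent $B_{s^-}$; since $f(F_{s^\frown i})\subseteq f(F_s)$, this does not destroy the inclusion of $f(F_{s^\frown i})$.

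The key claim is then $f(F)=\bigcap_n\bigcup_{|s|=n}B_s$, which is Borel. The inclusion $\subseteq$ is immediate. For $\supseteq$, let $z$ lie in the right-hand side. By disjointness within each level and the nesting, there is a unique branch $x\in\mathbb N^{\mathbb N}$ with $z\in B_{x|n}$ for all $n$. Each such $B_{x|n}$ is nonempty and contained in $\overline{f(F_{x|n})}$, so each $F_{x|n}$ is nonempty. Because $F$ is closed, $x\in F$. Continuity of $f$ at $x$ shows that $\overline{f(F_{x|n})}$ shrinks to within any neighborhood of $f(x)$. Since $z$ lies in all of these closures, $z=f(x)$.

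I expect the main obstacle to be this last verification, together with the careful recursive construction of the $B_s$ so that nesting, disjointness and the closure bound all hold simultaneously. The countable Lusin separation theorem itself I take as the standard input. The reduction steps are routine.
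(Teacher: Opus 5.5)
Your proposal is correct and is essentially the classical proof in Kechris: make $f$ a continuous injection on a closed subset of Baire space, build the Lusin scheme $(B_s)$ by countable Lusin separation, and derive the Borel-isomorphism statement formally (Theorem~15.1, with Corollary~15.2 obtained exactly as in your last step). The paper does not prove this lemma itself and only cites that result, so your route coincides with the cited source.
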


We shall repeatedly use the following elementary properties. The following results are well-known (see, for instance, \cite{DJK94} for background on Borel hyperfinite equivalence relations), but we record the proof for convenience of the reader.

\begin{lem}
\label{lem:hyperfinite-heredity}
Let $E$ be a Borel hyperfinite countable Borel equivalence relation on a
standard Borel space $Y$.
\begin{enumerate}
\item If $A\subseteq Y$ is Borel, then $E_{|A}$ is Borel hyperfinite.
\item Every Borel subequivalence relation $R\subseteq E$ is Borel hyperfinite.
\end{enumerate}
\end{lem}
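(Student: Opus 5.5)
Both parts follow directly from the definition: $E$ is Borel hyperfinite when there are finite Borel equivalence relations $F_0\subseteq F_1\subseteq\cdots$ on $Y$ with $E=\bigcup_n F_n$. Fix such a witnessing sequence once and for all, and restrict or intersect it. Part (1) will follow from part (2) together with a short extension argument, so I handle (2) first.

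\emph{Part (2).} Let $R\subseteq E$ be a Borel subequivalence relation and set $R_n:=R\cap F_n$.
\begin{enumerate}
\item Each $R_n$ is an intersection of two equivalence relations, so it is an equivalence relation.
\item Each $R_n$ is Borel as a subset of $Y\times Y$, since $R$ and $F_n$ are Borel.
\item Each $R_n$ has finite classes, because every $R_n$-class is contained in an $F_n$-class.
\item The sequence is increasing, and $\bigcup_n R_n=R\cap\bigcup_n F_n=R\cap E=R$.
\end{enumerate}
Hence $R$ is Borel hyperfinite. The only subtlety is which definition of ``finite Borel equivalence relation'' is in use. If only Borelness and finite classes are required, the argument is complete. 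If the definition also demands a Borel transversal or selector, I would supply one: with a fixed Borel linear order $<$ on $Y$ (for instance, transported from a Borel isomorphism $Y\cong\mathbb R$), send each point to the $<$-least element of its finite $R_n$-class. This selector is Borel by the Lusin--Novikov uniformization theorem, since $R_n$ has countable sections.

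\emph{Part (1).} Let $A\subseteq Y$ be Borel. Define $R$ on $Y$ by
\[
R:=E_{|A}\cup\{(y,y): y\in Y\}.
\]
This is a Borel subequivalence relation of $E$, so by part (2) it is Borel hyperfinite, witnessed by some sequence $R_n$. The restrictions $(R_n)_{|A}=R_n\cap(A\times A)$ are then finite Borel equivalence relations on the standard Borel space $A$, increasing, with union $E_{|A}$.

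Alternatively, one can restrict directly: set $F_n':=F_n\cap(A\times A)$, which works by the same four checks as in part (2). Either way, $A$ is standard Borel as a Borel subset of a standard Borel space, so $E_{|A}$ is Borel hyperfinite.

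I expect no real obstacle here. The only point needing care is the possible transversal requirement in the definition, which is handled by the Lusin--Novikov argument above.
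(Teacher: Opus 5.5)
Your proposal is correct and matches the paper's proof: intersect a witnessing sequence $(F_n)$ with $A\times A$ for part (1) and with $R$ for part (2). Your detour deriving (1) from (2) via $E_{|A}\cup\{(y,y):y\in Y\}$ and your Lusin--Novikov remark about selectors are both valid but unnecessary, since the direct restriction $F_n\cap(A\times A)$, which you also give, is exactly what the paper uses.
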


\begin{proof}
Write $E=\bigcup_n F_n$, where $F_1\subseteq F_2\subseteq\cdots$ are finite
Borel equivalence relations.  For the first assertion, use
$F_n\cap(A\times A)$.  For the second, use $F_n\cap R$.  In each case, the
resulting relations are increasing finite Borel equivalence relations whose
union is the required relation.
\end{proof}

\begin{lem}
\label{lem:transport}
Let $E$ and $F$ be countable Borel equivalence relations on standard Borel
spaces $Y$ and $Z$.  If a Borel isomorphism $\phi\colon Y\to Z$ satisfies $y\mathrel E y'$ if and only if $\phi(y)\mathrel F\phi(y')$, then $E$ is Borel hyperfinite if and only if $F$ is Borel hyperfinite.
\end{lem}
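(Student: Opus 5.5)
By the symmetry of the hypothesis it suffices to prove one direction. Once that is done, the other follows by applying the same argument to the Borel isomorphism $\phi^{-1}\colon Z\to Y$. This map is Borel by \Cref{lem:lusin-souslin}, or simply because it is the inverse of a Borel isomorphism. It satisfies $z\mathrel F z'$ if and only if $\phi^{-1}(z)\mathrel E\phi^{-1}(z')$, since $\phi$ is a bijection.

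So assume $F$ is Borel hyperfinite, and write $F=\bigcup_n F_n$ with $F_1\subseteq F_2\subseteq\cdots$ finite Borel equivalence relations on $Z$. Define the pulled-back relations
\[
E_n:=\{(y,y')\in Y\times Y:\ (\phi(y),\phi(y'))\in F_n\}=(\phi\times\phi)^{-1}(F_n).
\]
Each $E_n$ is Borel, because $\phi\times\phi\colon Y\times Y\to Z\times Z$ is Borel and $F_n$ is Borel. Each $E_n$ is an equivalence relation, since reflexivity, symmetry and transitivity transfer through $\phi$. Its classes are exactly the $\phi$-preimages of $F_n$-classes, and these are finite because $\phi$ is injective. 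Thus each $E_n$ is a finite Borel equivalence relation, and the sequence is increasing because the $F_n$ are.

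It remains to check that $\bigcup_n E_n=E$. We have $(y,y')\in\bigcup_n E_n$ if and only if $(\phi(y),\phi(y'))\in\bigcup_nF_n=F$, and by the hypothesis on $\phi$ this holds if and only if $y\mathrel E y'$. Hence $E$ is Borel hyperfinite.

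There is no real obstacle here; the only point needing care is that $E_n$ is Borel, which is exactly why $\phi$ must be Borel rather than just a bijection.
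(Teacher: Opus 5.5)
Your proof is correct and follows the paper's argument: transport an increasing sequence of finite Borel equivalence relations through $\phi$, here by pulling back via $E_n=(\phi\times\phi)^{-1}(F_n)$, and get the other direction by applying the same argument to $\phi^{-1}$. The only difference is that you write out the routine checks the paper leaves implicit: Borelness, finiteness of classes via injectivity, and $\bigcup_n E_n=E$.
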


\begin{proof}
Transport an increasing sequence of finite Borel equivalence relations through
$\phi$.  The converse follows by applying the same argument to $\phi^{-1}$.
\end{proof}

If $Z$ is a standard Borel space, $Y \subseteq Z$ a Borel subset and $E$ a Borel equivalence relation on $Z$, then we say that $Y$ is \defin{$E$-invariant} if whenever $y \in Y$ and $zEy$, then $z \in Y$. Equivalently, $Y$ is a union of $E$-equivalence classes.

\begin{lem}
\label{lem:countable-union}
Let $Y=\bigsqcup_{i\in I}Y_i$ be a countable Borel partition, and let $E$ be a
countable Borel equivalence relation on $Y$ such that every $Y_i$ is
$E$-invariant.  If each $E_{|Y_i}$ is Borel hyperfinite, then $E$ is Borel
hyperfinite.
\end{lem}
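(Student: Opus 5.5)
The plan is to glue the witnessing sequences on the pieces directly. For each $i\in I$, write $E_{|Y_i}=\bigcup_n F^i_n$, where $F^i_1\subseteq F^i_2\subseteq\cdots$ are finite Borel equivalence relations on $Y_i$. Then define
\[
F_n:=\bigcup_{i\in I} F^i_n \subseteq Y\times Y .
\]

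I would check four things in order.

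\begin{enumerate}
\item \emph{$F_n$ is an equivalence relation on $Y$.} The $Y_i$ partition $Y$, and each $F^i_n$ only relates points of $Y_i$. So $F_n$ is the disjoint union of equivalence relations living on disjoint pieces, and reflexivity, symmetry and transitivity are checked inside a single piece.
\item \emph{$F_n$ is finite.} Every $F_n$-class is an $F^i_n$-class for some $i$, hence finite.
\item \emph{$F_n$ is Borel.} Each $F^i_n$ is Borel in $Y_i\times Y_i$, which is a Borel subset of $Y\times Y$. Since $I$ is countable, $F_n$ is a countable union of Borel sets. Because $F^i_n\subseteq F^i_{n+1}$ for every $i$, the sequence $(F_n)$ is also increasing.
\item \emph{$\bigcup_n F_n=E$.} Both inclusions rest on invariance. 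Since $E_{|Y_i}\subseteq E$, we get $\bigcup_n F_n=\bigcup_i E_{|Y_i}\subseteq E$. Conversely, let $y\mathrel E y'$ with $y\in Y_i$. Because $Y_i$ is $E$-invariant, $y'\in Y_i$, so $(y,y')\in E_{|Y_i}=\bigcup_n F^i_n$.
\end{enumerate}

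There is no real obstacle here. The only points needing care are the use of invariance in the last step, without which $E$ could have pairs crossing between pieces that no $F_n$ covers, and the countability of $I$, which is what keeps $F_n$ Borel.
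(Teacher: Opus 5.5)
Your proof is correct and is essentially the paper's argument: you glue the witnessing sequences piece by piece and use $E$-invariance of the $Y_i$ to see that the union is all of $E$. The paper staggers the gluing, using $E_{j,n}$ on $Y_{i_j}$ only for $j\leq n$ and equality on the remaining pieces, but this is not needed: finiteness of $F_n$ only requires every class to be finite, and your direct union $\bigcup_i F^i_n$ already has this property.
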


\begin{proof}
Enumerate $I=\{i_0,i_1,\ldots\}$.  For every $j$, write
$E_{|Y_{i_j}}=\bigcup_n E_{j,n}$, where the $E_{j,n}$ are increasing finite
Borel equivalence relations.  Define $F_n$ to agree with $E_{j,n}$ on
$Y_{i_j}$ for $j\leq n$ and be equality on $Y_{i_j}$ for $j>n$.  Then
$(F_n)$ is increasing, every $F_n$ is finite and Borel, and $E=\bigcup_nF_n$.
\end{proof}

Given $H \leq G$ countable groups and a Borel action $H \curvearrowright Z$ on a standard Borel space $Z$, we can always enlarge this action to an action of $G$ on a standard Borel space, via the \emph{induced $G$-space} construction.

Let $H\leq G$ and let $Z$ be a standard Borel $H$-space.  The
\defin{induced $G$-space} is
\[
\Ind_H^G(Z):=(G\times Z)/{\sim},
\qquad
(gh,z)\sim(g,hz),
 \quad \text{ for every $g \in G, h \in H$ and $z \in Z$}\]
with $G$ acting by left multiplication on the first coordinate.

\begin{lem}
\label{lem:induced-hyperfinite}
If $E_H^Z$ is Borel hyperfinite, then $E_G^{\Ind_H^G(Z)}$ is Borel
hyperfinite.
\end{lem}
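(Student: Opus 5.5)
The plan is to choose a Borel transversal for the left cosets of $H$ in $G$, use it to identify $\Ind_H^G(Z)$ with a product $G/H\times Z$, and then check that the orbit relation of $G$ there is the same as the relation ``same $H$-orbit in the $Z$-coordinate''. Since $G$ is countable, fix a set of representatives $\{g_i\}_{i\in I}$ for the left cosets $G/H$, with $I$ countable. Every element of $\Ind_H^G(Z)$ has a unique representative of the form $(g_i,z)$: given $(g,z)$, write $g=g_ih$ and note $(g_ih,z)\sim(g_i,hz)$. Moreover $(g_i,z)\sim(g_j,z')$ forces $i=j$ and $z=z'$. Hence the map $\Ind_H^G(Z)\to I\times Z$, $[(g_i,z)]\mapsto(i,z)$, is a bijection. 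We take this bijection as the definition of the standard Borel structure on $\Ind_H^G(Z)$; if a quotient structure is intended instead, the two agree by \Cref{lem:lusin-souslin}. In these coordinates the action of $g\in G$ is $g\cdot(i,z)=(j,hz)$, where $gg_i=g_jh$ with $h\in H$. This is Borel, since for each fixed $g$ and $i$ the data $j,h$ are fixed and $z\mapsto hz$ is Borel.

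The key step is the description of the orbits. I claim that $(i,z)\mathrel{E_G}(j,z')$ if and only if $z\mathrel{E_H^Z}z'$. For the forward direction, applying $g$ to $(i,z)$ produces $(j,hz)$ with $h\in H$, so the $Z$-coordinate only moves within its $H$-orbit. For the converse, suppose $z'=hz$. Then the element $g:=g_jhg_i^{-1}$ satisfies $gg_i=g_jh$, so $g$ sends $(i,z)$ to $(j,hz)=(j,z')$.

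It follows that $E_G^{\Ind_H^G(Z)}$ corresponds to the relation $\mathbb{1}_I\times E_H^Z$ on $I\times Z$, where $\mathbb{1}_I$ denotes the full relation on $I$. Write $E_H^Z=\bigcup_n F_n$ with $F_n$ increasing finite Borel equivalence relations. Enumerate $I=\{i_0,i_1,\dots\}$ and let $I_n$ be the first $n+1$ indices. Define $R_n$ on $I\times Z$ by $(i,z)\mathrel{R_n}(j,z')$ if and only if either $z\mathrel{F_n}z'$ and $i,j\in I_n$, or $(i,z)=(j,z')$. Each $R_n$ is a Borel equivalence relation, and its classes have size at most $(n+1)$ times the size of an $F_n$-class, so $R_n$ is finite. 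The $R_n$ increase with $n$, and their union is $\mathbb{1}_I\times E_H^Z$. Finally, \Cref{lem:transport} transfers hyperfiniteness back to $E_G^{\Ind_H^G(Z)}$.

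The only delicate point is the Borel structure on the quotient, and it is handled by the choice of coset representatives described in the first paragraph.
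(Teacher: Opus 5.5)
Your proposal is correct and follows essentially the same route as the paper: identify $\Ind_H^G(Z)$ with $C\times Z$ via a left-coset transversal, show that the orbit relation becomes ``same $H$-orbit in the $Z$-coordinate'', and take $R_n$ from finite exhausting subsets of the transversal together with $F_n$. Adding the diagonal to your $R_n$ is a small improvement: the paper's $R_n$, as written, is not reflexive at points $(c,z)$ with $c\notin C_n$, so it is not an equivalence relation on all of $C\times Z$.
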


\begin{proof}
Choose a set $C\subseteq G$ of representatives for the left cosets $G/H$.
Then $\Ind_H^G(Z)$ is Borel isomorphic to $C\times Z$, and
\[
(c,z)\mathrel{E_G^{\Ind_H^G(Z)}}(c',z')
\quad\Longleftrightarrow\quad
z\mathrel{E_H^Z}z'
 \text{ for each $c,c' \in C$ and $z,z' \in Z$} \]
Write $E_H^Z=\bigcup_n F_n$, with $(F_n)$ increasing and finite Borel.  Choose
finite sets $C_1\subseteq C_2\subseteq\cdots$ with union $C$.  On $C\times Z$,
let $R_n$ relate $(c,z)$ and $(c',z')$ if
$c,c'\in C_n$ and $z\mathrel{F_n}z'$.  Each $R_n$ is a finite Borel
equivalence relation, the sequence is increasing, and its union is the induced
orbit relation.
\end{proof}

\begin{lem}
\label{lem:saturation-induced}
Let $G$ act by Borel automorphisms on a standard Borel space $Y$, let $A$ be a countable
$G$-set, and suppose
\[
Y=\bigsqcup_{a\in A}Y_a
\]A
is a Borel partition into nonempty sets satisfying $gY_a=Y_{ga}$.  Fix
$a_0\in A$, put $H:=\stab_G(a_0)$ and $Z:=Y_{a_0}$, and set
\[
B:=\bigsqcup_{a\in Ga_0}Y_a=GZ.
\]
Then $B\cong_G\Ind_H^G(Z)$ as Borel $G$-spaces.
\end{lem}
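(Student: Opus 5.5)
The plan is to write down an explicit map from the induced space to $B$ and check that it is a well-defined, $G$-equivariant Borel bijection; \Cref{lem:lusin-souslin} then upgrades it to a Borel isomorphism. First, $B$ is Borel: it is a countable union of the Borel sets $Y_a$, and it is $G$-invariant because $gY_a=Y_{ga}$ and $G(Ga_0)=Ga_0$. Moreover $B=GZ$, since each $a\in Ga_0$ has the form $ga_0$, so $Y_a=gZ$.

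Define $\Phi\colon\Ind_H^G(Z)\to B$ by $\Phi([g,z])=gz$.

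\emph{Well-defined.} If $(gh,z)\sim(g,hz)$ with $h\in H$, then $(gh)z=g(hz)$. Also $hz\in hY_{a_0}=Y_{ha_0}=Y_{a_0}$, so the relation $\sim$ really does stay inside $G\times Z$.

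\emph{Equivariant.} This is immediate: $\Phi(g'[g,z])=g'gz=g'\Phi([g,z])$.

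\emph{Surjective.} This follows from $B=GZ$.

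\emph{Injective.} This is the one step where the partition hypothesis is essential. Suppose $gz=g'z'$ with $z,z'\in Z$. Then
\[
gz\in Y_{ga_0}\cap Y_{g'a_0}.
\]
The sets $Y_a$ are pairwise disjoint, so $ga_0=g'a_0$, and hence $h:=g^{-1}g'\in H$. From $gz=g'z'=ghz'$ we get $z=hz'$. Therefore $(g',z')=(gh,z')\sim(g,hz')=(g,z)$, so $[g,z]=[g',z']$. Nonemptiness of the $Y_a$ plays no role here; it only makes the indexing faithful.

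For the Borel structure, I would use the description from \Cref{lem:induced-hyperfinite}. Fix a set $C$ of left coset representatives of $G/H$. Every class has a unique representative $(c,z)$ with $c\in C$, which identifies $\Ind_H^G(Z)$ with $C\times Z$, a countable disjoint union of copies of the standard Borel space $Z$ (itself Borel in $Y$). Under this identification the $G$-action is given by countably many Borel pieces. Namely, for $g\in G$ and $c\in C$, write $gc=c'h$ with $c'\in C$ and $h\in H$; then
\[
g(c,z)=(c',hz).
\]
On each piece $\{c\}\times Z$ the map $\Phi$ is $z\mapsto cz$, which is Borel because $c$ acts by a Borel automorphism. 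So $\Phi$ is an injective Borel map between standard Borel spaces. By \Cref{lem:lusin-souslin}, its image $B$ is Borel and $\Phi$ is a Borel isomorphism onto $B$, which completes the plan.

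The main subtlety is this Borel bookkeeping on the quotient, i.e.\ interpreting $\Ind_H^G(Z)$ as a standard Borel space through the section $C$. The injectivity step is the only place where disjointness of the partition is used.
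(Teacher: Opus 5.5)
Your proposal is correct and follows the paper's proof essentially step for step. You use the same map $\Phi([g,z])=gz$, derive injectivity from disjointness of the pieces (which forces $g^{-1}g'\in H$), get Borelness through a left-coset transversal, and finish with \Cref{lem:lusin-souslin}; your side remark that nonemptiness is not actually needed in the injectivity step is also right, since the intersection $Y_{ga_0}\cap Y_{g'a_0}$ is already witnessed to be nonempty.
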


\begin{proof}
We first define 
$\Phi\colon\Ind_H^G(Z)\longrightarrow B,
\Phi([g,z])=gz.$
This is well-defined, $G$-equivariant, and onto.  If $gz=g'z'$ with
$z,z'\in Z$, apply $g^{-1}$ to this equality.  The resulting common point
belongs both to $Y_{a_0}$ and to $Y_{g^{-1}g'a_0}$.  Since the partition
pieces are nonempty and disjoint, $g^{-1}g'a_0=a_0$.  Thus $g^{-1}g'\in H$, and $[g,z]=[g',z']$.  Hence $\Phi$
is injective.  Using a left-coset transversal, $\Phi$ is a countable union of
Borel maps.  It is therefore Borel, and \Cref{lem:lusin-souslin} completes the
proof.
\end{proof}

We now isolate the exact descriptive-set-theoretic input used later.

\begin{thm}
\label{thm:abstract-transfer}
Let $G$ act by Borel automorphisms on a standard Borel space $Y$.  Suppose
\[
Y=Y_0\sqcup\bigsqcup_{a\in A}Y_a
\]
is a countable Borel partition such that $Y_0$ is $G$-invariant, $A$ is a
countable $G$-set, every $Y_a$ is nonempty, and $gY_a=Y_{ga}$.  Let $W$ be a
standard Borel $G$-space, and suppose that there is a $G$-equivariant Borel
injection
$\phi\colon Y_0\longrightarrow W$.
Choose one representative $a_i$ from each $G$-orbit in $A$, and put
$H_i:=\stab_G(a_i)$. If $E_G^W$ and every $E_{H_i}^{Y_{a_i}}$ are Borel hyperfinite, then
$E_G^Y$ is Borel hyperfinite.
Conversely, if $E_G^Y$ is Borel hyperfinite, then every
$E_{H_i}^{Y_{a_i}}$ is Borel hyperfinite.  If $\phi$ is onto $W$, then $E_G^W$ is also Borel
hyperfinite, and consequently
$E_G^Y$ is Borel hyperfinite if and only if 
$E_G^W$ is Borel hyperfinite
and $E_{H_i}^{Y_{a_i}}$ is Borel hyperfinite for every $i$.

\end{thm}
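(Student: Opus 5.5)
Since $Y_0$ and each $B_i:=GY_{a_i}=\bigsqcup_{a\in Ga_i}Y_a$ are $G$-invariant Borel sets, I will treat each piece separately. Each $B_i$ is Borel as a countable union of Borel sets. The sets $Y_0$ and $B_i$ form a countable Borel partition of $Y$ into $E_G^Y$-invariant pieces, so \Cref{lem:countable-union} reduces the forward direction to showing that $E_G^Y$ restricted to $Y_0$ and to each $B_i$ is Borel hyperfinite.

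For $B_i$, apply \Cref{lem:saturation-induced} with the $G$-set $Ga_i$, the partition $B_i=\bigsqcup_{a\in Ga_i}Y_a$ and the base point $a_i$. This gives a $G$-equivariant Borel isomorphism $B_i\cong_G \Ind_{H_i}^G(Y_{a_i})$. By \Cref{lem:induced-hyperfinite}, the orbit relation on the induced space is Borel hyperfinite, and \Cref{lem:transport} transfers this to $E_G^{B_i}=(E_G^Y)_{|B_i}$.

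For $Y_0$, I use that $\phi$ is a Borel injection between standard Borel spaces. By \Cref{lem:lusin-souslin}, $\phi(Y_0)$ is Borel and $\phi\colon Y_0\to\phi(Y_0)$ is a Borel isomorphism. Equivariance and injectivity give $y\,E_G\,y'$ if and only if $\phi(y)\,E_G^W\,\phi(y')$. Indeed, $y'=gy$ implies $\phi(y')=g\phi(y)$; conversely, $\phi(y')=g\phi(y)=\phi(gy)$ forces $y'=gy$. Since $\phi(Y_0)$ is $G$-invariant, the relation $(E_G^W)_{|\phi(Y_0)}$ is Borel hyperfinite by \Cref{lem:hyperfinite-heredity}, and \Cref{lem:transport} transfers this to $(E_G^Y)_{|Y_0}$.

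For the converse, assume $E_G^Y$ is Borel hyperfinite. By \Cref{lem:hyperfinite-heredity}(1), its restriction to the Borel set $Y_{a_i}$ is hyperfinite. Moreover $E_{H_i}^{Y_{a_i}}\subseteq (E_G^Y)_{|Y_{a_i}}$ is a Borel subequivalence relation: it is the countable union over $h\in H_i$ of graphs of Borel maps. Hence it is hyperfinite by part (2). In fact the two relations coincide: if $gy\in Y_{a_i}$ with $y\in Y_{a_i}$, then $gy\in Y_{ga_i}\cap Y_{a_i}$, so $g\in H_i$. If $\phi$ is onto, then $\phi$ is a Borel isomorphism $Y_0\to W$ intertwining the orbit relations as above. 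Since $(E_G^Y)_{|Y_0}$ is hyperfinite by heredity, $E_G^W$ is hyperfinite by \Cref{lem:transport}. The final equivalence combines the two directions.

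The only point needing care is the Borel and invariance bookkeeping: that $\phi(Y_0)$ is Borel, which is exactly where injectivity is used via \Cref{lem:lusin-souslin}, and that it is $G$-invariant. There is no real obstacle beyond this.
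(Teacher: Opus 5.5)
Your proposal is correct and follows the paper's proof essentially step for step. The shared ingredients are Lusin--Souslin with heredity and transport for $Y_0$, the identification $B_i\cong_G\Ind_{H_i}^G(Y_{a_i})$ via the saturation and induction lemmas, the countable-union lemma for the forward direction, and heredity (with transport through $\phi$ when it is onto) for the converse. Your remark that $E_{H_i}^{Y_{a_i}}$ actually equals the restriction of $E_G^Y$ to $Y_{a_i}$ is correct. So is your choice to apply the saturation lemma to $B_i$ with the $G$-set $Ga_i$, which is slightly tidier bookkeeping than the paper's.
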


\begin{proof}
By \Cref{lem:lusin-souslin}, $\phi(Y_0)$ is a $G$-invariant Borel subset of
$W$, and $\phi$ is a Borel isomorphism from $Y_0$ onto its image.  Equivariance
and injectivity show that it is an isomorphism of the corresponding orbit
relations.  Hence $E_G^{Y_0}$ is Borel hyperfinite by
\Cref{lem:hyperfinite-heredity,lem:transport}.

For each $i$, put
\[
B_i:=\bigsqcup_{a\in Ga_i}Y_a.
\]
By \Cref{lem:saturation-induced},
$B_i\cong_G\Ind_{H_i}^G(Y_{a_i})$.  Thus $E_G^{B_i}$ is Borel hyperfinite by
\Cref{lem:induced-hyperfinite,lem:transport}.  Finally,
\[
Y=Y_0\sqcup\bigsqcup_iB_i
\]
is a countable $G$-invariant Borel partition, so
\Cref{lem:countable-union} gives the forward implication.

Now suppose that $E_G^Y$ is Borel hyperfinite.  For every $i$,
$E_{H_i}^{Y_{a_i}}$ is a Borel subequivalence relation of
${E_G^Y}_{|Y_{a_i}}$, and hence is Borel hyperfinite by
\Cref{lem:hyperfinite-heredity}.  If $\phi$ is onto, then since $Y_0$ is Borel and
$G$-invariant, we have that $E_G^{Y_0}$ is Borel hyperfinite; transporting through $\phi$
gives hyperfiniteness of $E_G^W$ by \Cref{lem:transport}.
\end{proof}

\section{Ends, boundaries, and tree-decompositions}

All graphs are simple and endowed with their path metrics.  Let $X$ be a
connected locally finite graph.  A \defin{ray} is a one-way infinite simple
path.  Two rays are equivalent if, after deleting any finite vertex set, they
have tails in the same component.  An equivalence class of rays is a
\defin{end}; the set of ends of $X$ is denoted by $\Om(X)$.  If
$S\subseteq V(X)$ is finite and $C$ is a component of $X-S$, then
\[
\Om(C):=\{\omega\in\Om(X):
\text{every ray in $\omega$ has a tail in $C$}\}
\]
is a basic open set in the end topology (see \cite[Section 8.6]{Diestel} for more on the end topology on the space of ends of a graph).  The end space of a connected locally
finite graph is compact (see \cite[Proposition 8.6.1]{Diestel}).  If $T$ is a tree, we write $\bd T:=\Om(T)$.  When
$T$ is countable, $\bd T$ is a standard Borel space (it may be realized as a Borel subset of $\{0,1\}^{E(T)}$).

Now suppose that $X$ is hyperbolic.  Its \defin{Gromov boundary} $\bd X$ consists of
equivalence classes of geodesic rays, where two rays are equivalent if they
have finite Hausdorff distance.  Every geodesic ray determines a end, so
there is a natural map $\epsilon_X\colon\bd X\longrightarrow\Om(X)$.

\begin{lem}
\label{lem:epsilon} \cite[Lemma~I.8.28(2), p.~145, and
Exercise~III.H.3.8, p.~430]{BH99}
If $X$ is connected, locally finite, and hyperbolic, then
$\epsilon_X\colon\bd X\to\Om(X)$ is onto and continuous.
\end{lem}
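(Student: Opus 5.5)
The lemma concerns $\epsilon_X$ and has two parts: well-definedness with continuity, and surjectivity. I will treat them in that order, working with geodesic rays based at a fixed vertex $x_0$.

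\textbf{Well-definedness and continuity.} First, every geodesic ray is a ray in the graph sense, since it is simple. Two geodesic rays at finite Hausdorff distance $D$ have tails in the same component of $X-S$ for every finite $S$. To see this, go far enough out that both tails avoid the $D$-neighbourhood of $S$. Then join points of the two tails by paths of length at most $D$; these paths miss $S$. So $\epsilon_X$ is well defined.

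For continuity, fix $\xi\in\bd X$ with $\epsilon_X(\xi)=\omega$, and a basic neighbourhood $\Om(C)$ of $\omega$, where $C$ is a component of $X-S$. Choose $R$ larger than $\max_{s\in S}d(x_0,s)$ plus a constant depending on $\delta$. If $\xi'$ is close to $\xi$ in the Gromov topology, say the Gromov product $(\xi\cdot\xi')_{x_0}$ is large, then geodesic rays from $x_0$ to $\xi$ and to $\xi'$ fellow-travel (within $O(\delta)$) up to distance about $R$. After that distance, both rays stay outside the ball containing $S$, because they are geodesics issuing from $x_0$. Hence the tail of the ray to $\xi'$ beyond distance $R$ lies in the same component of $X-S$ as the tail of the ray to $\xi$. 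It is connected to that tail by a short path near the fellow-travelling point, and this path is outside $B(x_0,R-O(\delta))\supseteq S$. Therefore $\epsilon_X(\xi')\in\Om(C)$.

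\textbf{Surjectivity.} Given an end $\omega$ and a ray $r=(r_0,r_1,\dots)$ in $\omega$, choose geodesic segments $[x_0,r_n]$. By local finiteness and a diagonal (K\"onig) argument, a subsequence converges pointwise to a geodesic ray $\gamma$ from $x_0$. I then need to show that $\gamma$ lies in $\omega$. Fix a finite $S$, and let $C$ be the component of $X-S$ containing a tail of $r$.

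This is the main obstacle, because the segments $[x_0,r_n]$ may pass through $S$. The fix is to choose a point $p$ on $r$ far from $x_0$ relative to $S$ and to use the $\delta$-thin triangles with vertices $x_0,p,r_n$. Since $d(x_0,r_n)\to\infty$ and $\gamma$ is the limit, $\gamma(k)$ for $k$ large lies on $[x_0,r_n]$ for suitable $n$. The portion of $[x_0,r_n]$ beyond distance $\max_{s\in S} d(x_0,s)+1$ from $x_0$ avoids $S$ and ends at $r_n\in C$, so it lies in $C$. Hence every $\gamma(k)$ with $k$ large lies in $C$. As $S$ was arbitrary, $\gamma$ belongs to $\omega$. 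This last step actually avoids hyperbolicity: the tail of a geodesic from $x_0$ cannot re-enter the bounded ball containing $S$, and because geodesics have unbounded distance from $x_0$, connectedness with $r_n\in C$ gives the conclusion. The only delicate point is ensuring that the limit ray's tail agrees with segments ending in $C$, which follows from the pointwise convergence at each finite index.

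Finally, because the image of a geodesic ray is determined by its class, these facts together give that $\epsilon_X$ is a continuous surjection, as cited from \cite{BH99}.
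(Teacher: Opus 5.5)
Your argument is correct, and it is essentially the standard proof behind the references the paper cites instead of proving the lemma: Bridson--Haefliger, Lemma~I.8.28(2), for surjectivity (a limit of geodesic segments $[x_0,r_n]$), and Exercise~III.H.3.8 for continuity (rays from $x_0$ whose Gromov product is large fellow-travel far out, so their tails lie in the same component of $X-S$). The appeal to $\delta$-thin triangles with vertices $x_0,p,r_n$ in your surjectivity paragraph is never used and can be deleted: as you yourself observe, the portion of $[x_0,r_n]$ beyond distance $\max_{s\in S}d(x_0,s)$ from $x_0$ already avoids $S$ and ends in $C$, which forces the limit ray into $C$, so that part holds in any connected locally finite graph.
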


Let $(T,\beta)$ be a finite-adhesion tree-decomposition of a connected graph
$X$.  An end $\omega\in\Om(X)$ orients every edge $e=uv$ of $T$ toward
the unique side containing a tail of every ray in $\omega$.  This is
well-defined by \Cref{lem:tree-edge-separation}.

A vertex $t\in V(T)$ \defin{captures} $\omega$ if $t$ is a sink in this
orientation.  An end $\eta\in\bd T$ \defin{captures} $\omega$ if every edge of
$T$ is oriented toward the component containing a tail of a ray in $\eta$. The following is well-known, but we provide a short proof for convenience of the reader.

\begin{lem}
\label{lem:end-capture}
Every end of $X$ is captured by exactly one element of
$V(T)\sqcup\bd T$.
\end{lem}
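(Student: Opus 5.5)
The plan is to fix an end $\omega\in\Om(X)$ and study the orientation it induces on $E(T)$ through \Cref{lem:tree-edge-separation}. First I will check that this orientation is \emph{consistent}: if $e=uv$ is oriented towards $v$ and $f$ is any edge of $T_u^e$, then $f$ is oriented towards the side containing $e$. To see this, let $f=xy$ with $y$ the endpoint closer to $e$. Then $T_x^f\subseteq T_u^e$, so $V_x^f\subseteq V_u^e$. If $f$ pointed towards $x$, a tail of every ray in $\omega$ would lie in $V_x^f\subseteq V_u^e$. That tail would also lie in $V_v^e$, so it would be contained in $V_u^e\cap V_v^e=S_e$. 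A ray is infinite and $S_e$ is finite, so this is impossible. Consistency then gives that each node $t$ has at most one outgoing edge. The reason is that if $t$ had two outgoing edges $e,f$, then $f$ would lie on the far side of $e$ from $e$'s head yet point away from $e$, contradicting consistency.

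Next, follow the outgoing edges starting from any node $t_0$. This produces a directed walk $t_0t_1t_2\ldots$ with each $t_i$ having a unique out-edge. The walk never reverses, since the edge $t_it_{i+1}$ is directed towards $t_{i+1}$. So it is either finite, ending at a sink, or an infinite ray in $T$. In the first case the terminal node is a sink. In the second case I will check that every edge of $T$ points towards the end $\eta$ of that ray. An edge on the ray does so by construction. An edge off the ray lies in a component of $T$ minus a ray-edge, and consistency with the nearest ray-edge shows it points towards the ray, hence towards $\eta$. So $\omega$ is captured by some element of $V(T)\sqcup\bd T$.

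For uniqueness I will split into cases. Two sinks $t\neq t'$ are impossible, because the first edge of the $t$–$t'$ path would be outgoing from one of them. If a node $t$ and an end $\eta$ both capture $\omega$, take the ray from $t$ in $\eta$. Its first edge must be oriented towards $\eta$, i.e. out of $t$, contradicting that $t$ is a sink. If two ends $\eta\neq\eta'$ both capture $\omega$, pick an edge separating tails of their rays. This edge would have to point to both sides, which is absurd.

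The main obstacle I expect is the rigorous consistency step, specifically the inclusion $V_x^f\subseteq V_u^e$ together with the finiteness argument. Here I must use that "contains a tail of every ray" means a tail of the ray, not just infinitely many of its vertices. I must also make sure the definition of capture by $\eta$ ("oriented toward the component containing a tail of a ray in $\eta$") matches orientation towards $\eta$. No hypothesis beyond finite adhesion should be needed; in particular hyperbolicity and connectedness of the bags play no role in this lemma.
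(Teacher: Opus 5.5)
Your proof is correct and follows essentially the paper's route: at most one outgoing edge at each node, then either a sink or a directed ray, then uniqueness. Your consistency property stands in for the paper's direct finite-intersection argument and for its observation that all directed rays merge. The one slip is in the two-sinks case: when the sinks are not adjacent, the first edge of the $t$--$t'$ path is not incident with $t'$, so your sentence does not yet give a contradiction. Fix it by noting that this edge points to $t$; consistency then forces the last edge of the path to point away from $t'$ (equivalently, as in the paper, some interior node of the path would have two outgoing edges).
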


\begin{proof}
At every vertex of $T$ there is at most one outgoing edge.  Indeed, suppose
that distinct edges $tu_1$ and $tu_2$ were both oriented away from $t$, and let
$C_i$ be the component of $T-tu_i$ containing $u_i$.  The two corresponding
sides intersect only in a subset of
$(\beta(t)\cap\beta(u_1))\cap(\beta(t)\cap\beta(u_2))$, which is finite.  A
ray cannot have a tail in both sides.

If some vertex has no outgoing edge, it is a sink.  There cannot be two sinks,
since the path between them would contain a vertex with two outgoing edges.

Suppose instead that every vertex has exactly one outgoing edge. Following outgoing edges from any vertex produces a ray.  
Any two such directed rays eventually meet; otherwise the path joining them would contain a vertex with two outgoing edges.  They therefore determine one end of $T$, and every edge is oriented toward that end.  
Uniqueness is immediate.
\end{proof}

Suppose now that $X$ is hyperbolic.  
We define 
$$\bd_T X
:=
\{\xi\in\bd X:
\epsilon_X(\xi)\text{ is captured by an end of }T\},$$
and, for $t\in V(T)$, we define 
$\bd_tX
:=
\{\xi\in\bd X:
\epsilon_X(\xi)\text{ is captured by }t\}$.
Then we have 
\begin{equation}
\label{eq:capture-partition}
\bd X
=
\bd_T X
\sqcup
\bigsqcup_{t\in V(T)}\bd_tX.
\end{equation}
For $\xi\in\bd_T X$, let $\kappa(\xi)$ be the unique end of $T$ capturing
$\epsilon_X(\xi)$.  This defines the \defin{capture map}
$\kappa\colon\bd_T X\longrightarrow\bd T$.

\begin{lem}
\label{lem:capture-equivariant}
Let $(T,\beta)$ be a $G$-invariant finite-adhesion tree-decomposition.  
Then $g\bd_tX=\bd_{gt}X$ for $g\in G,\ t\in V(T)$,
$\bd_T X$ is $G$-invariant, and the capture map $\kappa$ is
$G$-equivariant.
\end{lem}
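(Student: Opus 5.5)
The plan is to reduce everything to one naturality statement: the orientation of $E(T)$ induced by an end transforms correctly under $G$. Fix $g\in G$. Since $g$ is a graph automorphism of $X$, it maps rays to rays. It also preserves the ray-equivalence, because deleting a finite set $F$ and $gF$ correspond. So $g$ induces a bijection on $\Om(X)$. Likewise $g$ maps geodesic rays to geodesic rays and preserves finite Hausdorff distance, so it acts on $\bd X$. By construction $\epsilon_X(g\xi)=g\,\epsilon_X(\xi)$, since $g$ applied to a geodesic ray in $\xi$ is a ray representing $g\epsilon_X(\xi)$. This reduces the lemma to a statement about capture of ends of $X$.

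The key step is the following. Let $\omega\in\Om(X)$ and let $e=uv\in E(T)$ be oriented by $\omega$ toward $v$, meaning $V_v^e$ contains a tail of every ray in $\omega$. Then $ge=(gu)(gv)$ is oriented by $g\omega$ toward $gv$. To see this, note that $g$ is a tree automorphism, so it maps the component $T_v^e$ of $T-e$ onto the component $T_{gv}^{ge}$ of $T-ge$. $G$-invariance gives $\beta(gs)=g\beta(s)$, hence
\[
gV_v^e=\bigcup_{s\in V(T_v^e)}g\beta(s)=\bigcup_{s'\in V(T_{gv}^{ge})}\beta(s')=V_{gv}^{ge}.
\]
Every ray of $g\omega$ has the form $gR$ with $R\in\omega$, and $R$ has a tail in $V_v^e$, so $gR$ has a tail in $V_{gv}^{ge}$. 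By the uniqueness in \Cref{lem:tree-edge-separation} (using finiteness of $S_{ge}=gS_e$), $ge$ is oriented toward $gv$. In other words, the orientation induced by $g\omega$ is the $g$-image of the orientation induced by $\omega$.

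The conclusions then follow. If $t$ is a sink for $\omega$, then every edge at $gt$ has the form $ge$ with $e$ at $t$. Each such $e$ points toward $t$, so each $ge$ points toward $gt$, and $gt$ is a sink for $g\omega$. By \Cref{lem:end-capture}, $g\omega$ is captured exactly by $gt$. If instead $\omega$ is captured by $\eta\in\bd T$, then every edge points toward the side containing a tail of a ray of $\eta$. Applying $g$, every edge points toward the side containing a tail of a ray of $g\eta$, so $g\omega$ is captured by $g\eta$. Combining these with $\epsilon_X(g\xi)=g\epsilon_X(\xi)$ gives $g\bd_tX\subseteq\bd_{gt}X$ and $g\bd_TX\subseteq\bd_TX$, with $\kappa(g\xi)=g\kappa(\xi)$. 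Applying the same inclusions to $g^{-1}$ yields the equalities.

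The only place requiring care is the equivariance of $\epsilon_X$ and the action on ends. This is routine once one observes that automorphisms are isometries and preserve the relevant equivalence relations, so no step should be a genuine obstacle.
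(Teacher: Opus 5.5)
Your proof is correct and takes essentially the same approach as the paper. The paper's proof just asserts that the orientation induced by $g\omega$ is the $g$-translate of the orientation induced by $\omega$, and that $\epsilon_X(g\xi)=g\,\epsilon_X(\xi)$. You verify both facts, the first via $gV_v^e=V_{gv}^{ge}$ from $\beta(gs)=g\beta(s)$, then read off the three conclusions and use $g^{-1}$ to upgrade the inclusions to equalities.
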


\begin{proof}
For every end $\omega$, the orientation associated with $g\omega$ is the
$g$-translate of the orientation associated with $\omega$.  Also
$\epsilon_X(g\xi)=g\epsilon_X(\xi)$.  The assertions follow directly from the
definitions.
\end{proof}

\begin{lem}
\label{lem:capture-continuous}
Let $X$ be a connected locally finite hyperbolic graph and let $(T,\beta)$ be a finite-adhesion tree-decomposition.  Then
$\kappa\colon\bd_T X\longrightarrow\bd T$ is continuous, where $\bd_T X$ has the subspace topology inherited from
$\bd X$.
\end{lem}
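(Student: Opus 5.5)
Since $\kappa=\kappa'\circ\epsilon_X|_{\bd_T X}$, where $\kappa'$ sends an end of $X$ captured by an end of $T$ to that end of $T$, and $\epsilon_X$ is continuous by \Cref{lem:epsilon}, it suffices to show that $\kappa'$ is continuous on the subspace $\Om_T(X)\subseteq\Om(X)$ of ends captured by ends of $T$. Continuity will be checked on basic open sets of $\bd T$. Fix a base node $r\in V(T)$. For $\eta\in\bd T$, a neighbourhood basis of $\eta$ consists of the sets $U_e:=\bd(T_v^e)$, where $e=uv$ is an edge on the ray from $r$ to $\eta$, $v$ is the endpoint farther from $r$, and $\bd(T_v^e)$ denotes the ends of $T$ with a tail in $T_v^e$. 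So fix $\omega_0=\epsilon_X(\xi_0)$ with $\kappa'(\omega_0)=\eta$ and such an edge $e$. We must find an open neighbourhood $N$ of $\omega_0$ in $\Om(X)$ with $\kappa'(N\cap\Om_T(X))\subseteq U_e$.

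Take the next edge $e'=vw$ on the ray from $r$ towards $\eta$ beyond $e$, so that $T_w^{e'}\subseteq T_v^e$. Since $\omega_0$ is captured by $\eta$, the edge $e'$ is oriented towards $w$, so every ray in $\omega_0$ has a tail in $V_w^{e'}-S_{e'}$. The set $S_{e'}$ is finite, so let $C$ be the component of $X-S_{e'}$ containing these tails and put $N:=\Om(C)$, which is a basic open set containing $\omega_0$. By \Cref{lem:tree-edge-separation}, no edge joins $V_w^{e'}-S_{e'}$ to $V_v^{e'}-S_{e'}$, hence $C\subseteq V_w^{e'}-S_{e'}$. Therefore every $\omega\in N$ orients $e'$ towards $w$.

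Now let $\omega\in N\cap\Om_T(X)$ with $\kappa'(\omega)=\eta'$. All edges are oriented towards $\eta'$, and $e'$ points to $w$, so $\eta'$ has a tail in $T_w^{e'}\subseteq T_v^e$, i.e.\ $\eta'\in U_e$. Indeed, if $\eta'$ lay on the $v$-side of $e'$, then $e'$ would be oriented towards $v$. Hence $\kappa'$ is continuous, and so is $\kappa$.

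The main (mild) obstacle is the bookkeeping around the orientation: we need that "oriented toward $w$" for $\omega$ forces the capturing end to lie beyond $e'$, and that the component $C$ lies entirely on one side. Both follow from \Cref{lem:tree-edge-separation} and the definition of capture. Using $e$ itself instead of the extra edge $e'$ would also work; $e'$ just gives slack.
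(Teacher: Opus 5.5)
Your proof is correct and follows essentially the same route as the paper. Both arguments rest on the same fact: since $S_e$ is finite and each side of $e$ minus $S_e$ is a union of components of $X-S_e$, the set of ends of $X$ orienting $e$ toward a given side is open in $\Om(X)$, and continuity then follows by pulling back through the continuous map $\epsilon_X$. The paper states this directly as $\kappa^{-1}(\mathcal O_{\vec e})=\bd_T X\cap\epsilon_X^{-1}(U_{\vec e})$ for the subbasic sets, whereas you argue pointwise with neighbourhood bases; as you note, the auxiliary edge $e'$ is unnecessary.
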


\begin{proof}
Fix an oriented edge $\vec e=(u,v)$ of $T$, and let $T_v^e$ be the component
of $T-e$ containing $v$.  The set
\[
\mathcal O_{\vec e}
:=
\{\eta\in\bd T:
\text{$e$ points toward $T_v^e$ in the orientation toward $\eta$}\}
\]
is clopen in $\bd T$, and finite intersections of such sets over all oriented edges $e$ form a basis for the topology on $\Omega(X)$.
Let
\[
U_{\vec e}
:=
\{\omega\in\Om(X):
\text{every ray in $\omega$ has a tail in }V_v^e\}.
\]
By \Cref{lem:tree-edge-separation}, after deleting the finite set $S_e$, each
of $V_u^e-S_e$ and $V_v^e-S_e$ is a union of components of $X-S_e$.  Hence
$U_{\vec e}$ and its complement are open in $\Om(X)$, so $U_{\vec e}$ is clopen.  
Moreover, we have $\kappa^{-1}(\mathcal O_{\vec e})
=
\bd_T X\cap\epsilon_X^{-1}(U_{\vec e})$.
This is open in $\bd_T X$ by \Cref{lem:epsilon}.  
Therefore $\kappa$ is continuous.
\end{proof}

\section{Geometric control of the boundary pieces}

We now introduce the kinds of tree-decompositions we will work with. These are the tree-decompositions with connected bags and such that the adhesion sets have uniformly bounded cardinalities and diameters. 
We use the conventions $\operatorname{diam}(\varnothing)=0$ and
$\sup\varnothing=0$.

\begin{defn}
\label{def:bounded}
Let $X$ be a connected locally finite graph. A finite-adhesion
\newline tree-decomposition $(T,\beta)$ is \defin{bounded} if:
\begin{enumerate}
\item $T$ is countable;
\item every induced bag graph $X_t:=X[\beta(t)]$ is connected;
\item for every $t\in V(T)$,
$D_t:=
\sup_{ut \in E(T)}
\operatorname{diam}_{X_t}(\beta(t)\cap\beta(u))
<\infty$;
\item there are $M,D<\infty$ such that, for every $e\in E(T)$,
$|S_e|\leq M$ and $\operatorname{diam}_X(S_e)\leq D$.
\end{enumerate}
\end{defn}

\begin{lem}
\label{lem:bag-boundary}
Let $X$ be a connected locally finite hyperbolic graph, let $(T,\beta)$ be a
finite-adhesion tree-decomposition, and let $t\in V(T)$ be given.  Assume that $X_t$ is connected and that
\[
D_t=
\sup_{ut \in E(T)}
\operatorname{diam}_{X_t}(\beta(t)\cap\beta(u))
<\infty.
\]
Then the inclusion $\iota_t\colon X_t\hookrightarrow X$ is a quasi-isometric embedding.  
In particular, $X_t$ is hyperbolic. Furthermore, $\iota_t$ induces a
homeomorphism $\bd\iota_t\colon\bd X_t\longrightarrow\bd_tX$. 
\end{lem}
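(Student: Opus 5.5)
The plan has three stages: first show that $\iota_t$ is a quasi-isometric embedding by replacing detours through the rest of $X$ with paths inside $X_t$; then deduce hyperbolicity of $X_t$; and finally identify $\bd X_t$ with $\bd_tX$.

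\textbf{Step 1: $\iota_t$ is a quasi-isometric embedding.} The inequality $d_X\le d_{X_t}$ is trivial, so only a reverse bound is needed. I would aim for
\[
d_{X_t}(x,y)\le (D_t+1)\,d_X(x,y)\qquad(x,y\in\beta(t)).
\]
Take an $X$-geodesic $P$ from $x$ to $y$ and decompose it into maximal subpaths lying inside $\beta(t)$ and excursions leaving $\beta(t)$. Consider an excursion with last vertex $a\in\beta(t)$ before leaving and first vertex $b\in\beta(t)$ on returning. Its interior vertices lie outside $\beta(t)$. By the subtree axiom and \Cref{lem:tree-edge-separation}, the interior lies in $V_u^e-S_e$ for a single neighbour $u$ of $t$, where $e=tu$. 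Both $a$ and $b$ are adjacent to that interior, so they lie in $S_e=\beta(t)\cap\beta(u)$. Hence $d_{X_t}(a,b)\le D_t$, while the excursion has length at least $1$ (indeed at least $2$). Replacing each excursion by an $X_t$-path of length at most $D_t$ gives the bound above.

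\textbf{Step 2: hyperbolicity of $X_t$.} Geodesics of $X_t$ are quasi-geodesics in $X$. Hyperbolicity of $X_t$ then follows from the Morse lemma in $X$, after first arranging that $X_t$ is quasi-convex in $X$. For quasi-convexity, I would show that an $X$-geodesic between points of $\beta(t)$ stays near $\beta(t)$. This is the delicate point: an excursion into $V_u^e$ leaves and re-enters through $S_e$, which has $X$-diameter at most $D_t$, and because $P$ is a geodesic the excursion has length at most $D_t$. So $P$ lies in the $D_t$-neighbourhood of $\beta(t)$. The standard fact that a quasi-isometrically embedded, quasi-convex subspace of a hyperbolic space is hyperbolic then gives hyperbolicity of $X_t$. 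The same fact yields a topological embedding $\bd\iota_t\colon\bd X_t\to\bd X$, with image consisting of the limit points of $\beta(t)$.

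\textbf{Step 3: the image of $\bd\iota_t$ is $\bd_tX$.} This is the main obstacle.
\begin{itemize}
\item[(i)] \emph{Image contained in $\bd_tX$.} A geodesic ray $\gamma$ in $X_t$ is a quasi-geodesic ray in $X$ contained in $\beta(t)$. Fix an edge $e=tu$. The ray $\gamma$ lies in $V_t^e$, so the end $\epsilon_X(\bd\iota_t\xi)$ is oriented toward $t$ on every edge at $t$. Thus $t$ is a sink, i.e.\ it captures this end.
\item[(ii)] \emph{$\bd_tX$ contained in the image.} Let $\xi\in\bd_tX$, take a geodesic ray $\gamma$ from a basepoint in $\beta(t)$, and consider each excursion of $\gamma$ outside $\beta(t)$. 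Each excursion enters some $V_u^e-S_e$ through $S_e$. If an excursion never returned, $\gamma$ would have a tail in $V_u^e-S_e$, and $e$ would be oriented away from $t$, contradicting capture by $t$. So every excursion returns, and by the geodesic argument of Step 2 each excursion has length at most $D_t$. Replacing the excursions as in Step 1 produces a quasi-geodesic ray in $X_t$ at bounded Hausdorff distance from $\gamma$, so $\xi$ lies in the image.
\end{itemize}

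Continuity of the inverse then comes from $\bd\iota_t$ being a topological embedding for a quasi-isometric embedding of hyperbolic spaces, with image $\bd_tX$. If uniformity in the quasi-convexity step fails, I would instead work directly with sequential convergence in Gromov products, using the bound on excursion lengths.
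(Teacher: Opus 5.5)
Your proposal is correct and follows essentially the same route as the paper: the excursion-replacement bound $d_{X_t}\le (D_t+1)d_X$, quasi-convexity from the fact that excursions of $X$-geodesics have length at most $D_t$, the image lying in $\bd_tX$ because an $X_t$-ray meets each far side only in a finite adhesion set, and surjectivity from the fact that a ray representing a point of $\bd_tX$ must return to $\beta(t)$ infinitely often. The only differences are technical. For surjectivity you directly modify the $X$-geodesic ray into an $X_t$-quasi-geodesic ray, relying on the standard fact that quasi-geodesic rays in proper hyperbolic spaces determine boundary points, whereas the paper extracts a limiting $X_t$-geodesic ray from $X_t$-geodesics $p_0\to p_n$ via the Morse lemma and a diagonal argument. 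You also get continuity of the inverse from the general boundary-embedding fact for quasi-isometric embeddings, whereas the paper uses compactness of $\bd X_t$.
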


\begin{proof}
Let $C$ be a component of $T-t$, and let $u$ be the neighbour of $t$ contained
in $C$.  Then
\begin{equation}
\label{eq:bag-side-intersection}
\beta(t)\cap\bigcup_{s\in V(C)}\beta(s)
=
\beta(t)\cap\beta(u).
\end{equation}
Indeed, if $x$ belongs to the left-hand side, then $T_x$ contains $t$ and a vertex of $C$, and hence contains $u$.
Let $P=v_0v_1\cdots v_k$ be a path in $X$ whose endpoints lie in
$\beta(t)$ and whose internal vertices lie outside $\beta(t)$.  Every internal
vertex $x$ has $T_x$ contained in a unique component of $T-t$.  Consecutive
internal vertices determine the same component, since an edge of $X$ is
contained in a bag.  Thus all internal vertices of $P$ lie on one side,
corresponding to a component $C$ of $T-t$ and its neighbour $u$ of $t$.
A bag containing the edge $v_0v_1$ lies in $C$; since $T_{v_0}$ also contains
$t$, its connectedness forces $u\in T_{v_0}$.  Hence
$v_0\in\beta(t)\cap\beta(u)$, and the same argument applies to $v_k$.
Therefore the endpoints of every such path belong to one incident
adhesion set.

Now let $x,y\in\beta(t)$, and let $\gamma$ be an $X$-geodesic from $x$ to $y$.  
Replace every maximal subpath of $\gamma$ outside $\beta(t)$ by a path inside $X_t$ of length at most $D_t$.  
The resulting walk contains an $X_t$-path from $x$ to $y$ of length at most
$(D_t+1)|\gamma|$.  Therefore
\[
d_X(x,y)
\leq d_{X_t}(x,y)
\leq(D_t+1)d_X(x,y),
\]
so $\iota_t$ is a quasi-isometric embedding.
The same argument shows that $X_t$ is $D_t$-quasiconvex in $X$.  Indeed, a maximal subpath of an $X$-geodesic outside $\beta(t)$ has endpoints $a,b$ in one incident adhesion set, and hence has length $d_X(a,b)\leq d_{X_t}(a,b)\leq D_t$.
Thus $X_t$, with its intrinsic path metric, is hyperbolic and the inclusion induces the asserted injective boundary map.
Let $\xi\in\bd X_t$, represented by an $X_t$-geodesic ray $\rho$.  
As a path in $X$, $\rho$ is a quasi-geodesic ray.  For every component $C$ of $T-t$ with neighbour $u$ of $t$, equation \eqref{eq:bag-side-intersection} gives
\[
\rho\cap\bigcup_{s\in V(C)}\beta(s)
\subseteq
\beta(t)\cap\beta(u),
\]
which is finite.  
Any geodesic ray representing $\bd\iota_t(\xi)$ lies at
finite Hausdorff distance from $\rho$, and therefore determines the same end.  
That end orients every edge incident with $t$ toward $t$.
Therefore $\bd\iota_t(\xi)\in\bd_tX$.

Conversely, let $\xi\in\bd_tX$, and let $r$ be an $X$-geodesic ray representing $\xi$.  
The ray $r$ meets $\beta(t)$ infinitely often.  
Otherwise a tail of $r$ would avoid $\beta(t)$; consecutive vertices of that tail would then lie on the same side of $T-t$, so the entire tail would lie in one such side, contradicting capture by $t$.

Choose vertices $p_0,p_1,p_2,\ldots$ of $r\cap\beta(t)$ tending to infinity along $r$, and let $\alpha_n$ be an $X_t$-geodesic from $p_0$ to $p_n$.  
The paths $\alpha_n$, viewed in $X$, are uniform quasi-geodesics.  
By the Morse lemma (see \cite[Theorem III.H.1.7]{BH99}), they remain within a uniform distance of the corresponding subsegments of $r$.  
Since $X_t$ is locally finite and
$d_{X_t}(p_0,p_n)\to\infty$, a subsequence converges to an $X_t$-geodesic ray
$\alpha$ from $p_0$.  A standard diagonal argument, using the two-sided
Hausdorff bound supplied by the Morse lemma, shows that the image of $\alpha$
in $X$ has finite Hausdorff distance from $r$.  Hence
$\bd\iota_t([\alpha])=\xi$.  Thus the image is exactly
$\bd_tX$.

Finally, $\bd X_t$ is compact and $\bd X$ is Hausdorff, so the continuous
bijection from $\bd X_t$ onto its image is a homeomorphism.
\end{proof}

We next prove that the capture map is injective, which is a consequence of the above bounds on the cardinality and diameter of the adhesion sets. 

\begin{lem}
\label{lem:capture-injective}
Let $X$ be a connected locally finite hyperbolic graph, and let $(T,\beta)$ be a finite-adhesion tree-decomposition.  
Let there be $M,D<\infty$ such that $|S_e|\leq M$ and $\operatorname{diam}_X(S_e)\leq D$ for every $e\in E(T)$.  Then
$\kappa\colon\bd_T X\longrightarrow\bd T$
is injective.
\end{lem}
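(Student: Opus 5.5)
Suppose $\xi,\xi'\in\bd_T X$ satisfy $\kappa(\xi)=\kappa(\xi')=\eta$. The plan is to show that geodesic rays $r,r'$ representing $\xi,\xi'$ pass within a uniformly bounded distance of the same infinite sequence of points going to infinity. Hyperbolicity then forces $\xi=\xi'$.

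First choose a ray $t_0t_1t_2\cdots$ in $T$ representing $\eta$, with edges $e_n=t_nt_{n+1}$. Since $\epsilon_X(\xi)$ is captured by $\eta$, every ray of $\epsilon_X(\xi)$ has a tail in $V_{t_{n+1}}^{e_n}$ for every $n$. The starting vertex $r(0)$ lies in only finitely many bags' sides in the wrong direction. More precisely, $T_{r(0)}$ is a subtree, and for $n$ large it lies in $T_{t_n}^{e_n}$, so $r(0)\in V_{t_n}^{e_n}-S_{e_n}$ for all large $n$. If not, $r(0)\in S_{e_n}$ for infinitely many $n$, and then $T_{r(0)}$ contains an infinite ray of $T$. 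This can be avoided by using local finiteness: each vertex of $X$ lies in only finitely many adhesion sets along a fixed ray? I would instead argue directly. The sets $V_{t_{n+1}}^{e_n}$ are decreasing, and their intersection meets no ray tail in a way that keeps $r(0)$ inside for all $n$. If necessary, I would just pick $n$ with $r(0),r'(0)\notin V_{t_{n+1}}^{e_n}-S_{e_n}$, after discarding finitely many $n$ or passing to a subsequence. By \Cref{lem:tree-edge-separation}, each of $r$ and $r'$ then goes from one side of the finite separator $S_{e_n}$ to the other side, so each meets $S_{e_n}$. This gives points $x_n\in r\cap S_{e_n}$ and $x'_n\in r'\cap S_{e_n}$ with $d_X(x_n,x'_n)\le D$.

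The main obstacle is checking that $x_n\to\infty$, that is, that the adhesion sets $S_{e_n}$ leave every finite set. This is where the bound $|S_e|\le M$ is used. Suppose a vertex $x$ lies in $S_{e_n}$ for infinitely many $n$. Then $T_x$ contains all $t_n$ from some point on, so $x\in\beta(t_n)$ for all large $n$. Now fix a finite ball $B$. If infinitely many $S_{e_n}$ met $B$, pigeonhole would give a single vertex $x\in B$ with $x\in\beta(t_n)$ for all large $n$. I then need to exclude this using the fact that the end is captured by $\eta$ and not by a vertex. The difficulty is that local finiteness of $X$ alone does not prevent a vertex from lying in infinitely many bags.

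An alternative that sidesteps this is to take $x_n$ to be the last point of $r$ in $S_{e_n}$. Since $r$ has a tail in $V_{t_{n+1}}^{e_n}$ and the sets are nested, the points $x_n$ move monotonically along $r$. If they stayed bounded, then a fixed vertex of $r$ would lie in all $S_{e_n}$ from some point on. The tail of $r$ after this vertex lies in $\bigcap_n V_{t_{n+1}}^{e_n}$. If this intersection contains an infinite geodesic tail, then edges of that tail lie in bags $\beta(s)$ with $s$ in $T_{t_{n+1}}^{e_n}$ for every $n$, which is impossible, since $\bigcap_n T_{t_{n+1}}^{e_n}=\varnothing$. Each edge lies in one bag, and that bag lies outside $T_{t_{n+1}}^{e_n}$ for large $n$, so that edge is not in $V_{t_{n+1}}^{e_n}-S_{e_n}$ and has both endpoints in the $S_{e_n}$ for large $n$. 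Iterating this over consecutive edges contradicts $|S_{e_n}|\le M$, since it would force more than $M$ tail vertices into a single $S_{e_n}$. Hence $d(r(0),x_n)\to\infty$.

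Finally, $x_n\to\xi$ along $r$, and $d(x_n,x'_n)\le D$ with $x'_n\in r'$. Therefore the Gromov product $(x_n\cdot x'_n)_{r(0)}\to\infty$, and so the two rays $r,r'$ stay at bounded Hausdorff distance. Hence $\xi=\xi'$.
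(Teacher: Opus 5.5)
Your proof is correct, but it shows that the crossing points escape to infinity by a different mechanism than the paper.

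\textbf{The paper's route.} It introduces the set $P_\eta$ of vertices lying in $\beta(t_n)$ for all large $n$. It uses $|S_e|\le M$ to get $|P_\eta|\le M$. It then shows that every finite set meets $S_n$ only inside $P_\eta$ for large $n$. Finally, it takes crossing points of the rays with $S_n$ after their last visit to $P_\eta$.

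\textbf{Your route.} You take $x_n$ to be the last point of $r$ in $S_{e_n}$ and note that these points move monotonically along $r$. If they were bounded, they would be eventually constant, so a tail of $r$ would lie in $V_{t_{n+1}}^{e_n}-S_{e_n}$ for all large $n$. This is already a contradiction. For any vertex $w$ of that tail, connectedness of $T_w$ gives $T_w\subseteq T_{t_{n+1}}^{e_n}$ for all large $n$, while $\bigcap_n T_{t_{n+1}}^{e_n}=\varnothing$. So your final appeal to $|S_{e_n}|\le M$, via $M+1$ tail edges, is valid but superfluous.

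\textbf{What each approach gives.} Your argument avoids the auxiliary set $P_\eta$ and makes clear that injectivity needs only finite adhesion and the diameter bound $D$. The paper's argument gives a ray-independent statement instead: the adhesion sets along $\eta$ eventually meet any finite set only inside the bounded exceptional set $P_\eta$.

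\textbf{A step to tighten.} You need that $r$ actually meets $S_{e_n}$ for all large $n$. Your first claim, that $r(0)\in V_{t_n}^{e_n}-S_{e_n}$ for all large $n$, is not justified as written. The weaker condition you fall back on, $r(0)\notin V_{t_{n+1}}^{e_n}-S_{e_n}$, suffices and has a one-line proof. Pick $s\in T_{r(0)}$. For all large $n$, $s$ lies in $T_{t_n}^{e_n}$, so $r(0)\in V_{t_n}^{e_n}$. Since $r$ also has a tail in $V_{t_{n+1}}^{e_n}$, \Cref{lem:tree-edge-separation} forces it to meet $S_{e_n}$. Alternatively, start both rays at a vertex of $\beta(t_0)$, which lies in $V_{t_n}^{e_n}$ for every $n$.
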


\begin{proof}
Suppose that $\kappa(\xi)=\kappa(\zeta)=\eta$, and let
$t_0t_1t_2\cdots$ be a ray representing $\eta$.  Put
$S_n:=\beta(t_n)\cap\beta(t_{n+1})$,
let $C_n$ be the component of $T-t_nt_{n+1}$ containing
$t_{n+1},t_{n+2},\ldots$, and set
\[
V_n:=\bigcup_{s\in V(C_n)}\beta(s).
\]
Next we define $P_\eta
:=
\{x\in V(X):t_n\in T_x\text{ for all sufficiently large }n\}$.
Every finite subset of $P_\eta$ is contained in $S_n$ for all sufficiently large $n$.  
Hence $|P_\eta|\leq M$.
For a fixed vertex $x$, the set of indices $n$ for which $t_n\in T_x$ is an interval, because $T_x$ is connected.  It follows that if $x\notin P_\eta$, then $x$ lies in only finitely many $S_n$.  
Consequently, for every finite $F\subseteq V(X)$,
\begin{equation}
\label{eq:adhesion-escape}
S_n\cap F\subseteq P_\eta
\qquad\text{for all sufficiently large }n.
\end{equation}
Moreover, if $x\notin P_\eta$, then $x\notin V_n$ for all sufficiently large
$n$.  Indeed, if $T_x$ met arbitrarily deep tail components $C_n$, its
connectedness would force it to contain all sufficiently large $t_n$.

Let $r_\xi$ and $r_\zeta$ be geodesic rays from a common base vertex $o$
representing $\xi$ and $\zeta$.  Choose vertices $x\in r_\xi$ and
$y\in r_\zeta$ after the last intersections of these rays with the finite set
$P_\eta$.  For all sufficiently large $n$, the vertices $x,y$ lie outside
$V_n$, while both rays have tails in $V_n$.  By
\Cref{lem:tree-edge-separation}, there are therefore vertices
$p_n\in r_\xi\cap S_n$ and $q_n\in r_\zeta\cap S_n$
after $x$ and $y$, respectively.  In particular,
$p_n,q_n\notin P_\eta$.

Fix $R<\infty$.  The ball $B_X(o,R)$ is finite.  By
\eqref{eq:adhesion-escape}, for all sufficiently large $n$,
$S_n\cap B_X(o,R)\subseteq P_\eta$.
Thus $p_n,q_n\notin B_X(o,R)$ for all sufficiently large $n$.  Hence
\[
d_X(o,p_n)\longrightarrow\infty,
\qquad
d_X(o,q_n)\longrightarrow\infty.
\]
On the other hand, $p_n,q_n\in S_n$, so $d_X(p_n,q_n)\leq D$.  Therefore
$(p_n\mid q_n)_o\to\infty$, and the two geodesic rays determine the same point
of $\bd X$.  Hence $\xi=\zeta$.
\end{proof}

To prove surjectivity of $\kappa$, we will need the following property of our tree-decomposition. 

\begin{defn}
\label{def:end-essential}
A finite-adhesion tree-decomposition $(T,\beta)$ of $X$ is
\defin{end-essential} if, for every edge $e=uv\in E(T)$, each of the two sides
$V_u^e$ and $V_v^e$ contains a tail of every ray in an end of $X$.  
\end{defn}

\begin{lem}
\label{lem:capture-surjective}
Let $X$ be a connected locally finite hyperbolic graph, and let $(T,\beta)$ be
an end-essential finite-adhesion tree-decomposition.  Then
$
\kappa\colon\bd_T X\longrightarrow\bd T$
is surjective.
\end{lem}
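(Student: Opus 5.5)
Fix $\eta\in\bd T$, represented by a ray $t_0t_1t_2\cdots$ in $T$. Write $e_n:=t_nt_{n+1}$, let $C_n$ be the component of $T-e_n$ containing $t_{n+1}$, and put $V_n:=\bigcup_{s\in V(C_n)}\beta(s)$. These sets are nested, $V_0\supseteq V_1\supseteq\cdots$, because $C_{n+1}\subseteq C_n$. The plan is to produce an end $\omega\in\Om(X)$ that, for every $n$, has a tail of every ray in $V_n$. Such an $\omega$ orients each $e_n$ toward $t_{n+1}$.

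That is not yet enough: every edge of $T$ must point toward $\eta$. Let $f=ab$ be an edge not on the ray, and let $b$ be its endpoint closer to the ray. Then for large $n$ we have $C_n\cap T^f_a=\varnothing$. Since $\omega$ has tails in $V_n$, it cannot have tails in $V^f_a-S_f$, so $f$ points toward $b$, which is toward $\eta$. Thus $\omega$ is captured by $\eta$. By \Cref{lem:epsilon}, $\epsilon_X$ is onto, so some $\xi\in\bd X$ has $\epsilon_X(\xi)=\omega$. Then $\xi\in\bd_TX$ and $\kappa(\xi)=\eta$.

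To build $\omega$, use end-essentiality. For each $n$ let $A_n$ be the set of ends of $X$ that have a tail of every ray in $V_n$. By \Cref{def:end-essential}, applied to the side $V_{t_{n+1}}^{e_n}=V_n$, each $A_n$ is nonempty. By the proof of \Cref{lem:capture-continuous}, $A_n$ is the clopen set $U_{\vec e_n}$, hence closed. The sets are nested: an end with tails in $V_{n+1}$ has tails in $V_n$. Since $\Om(X)$ is compact (\cite[Proposition 8.6.1]{Diestel}), $\bigcap_nA_n\neq\varnothing$, and any $\omega$ in the intersection works.

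The main obstacle is the nestedness and closedness bookkeeping. The orientation is defined using the finite adhesion set $S_{e_n}$, so "tail in $V_n$" has to be read as "tail in $V_n-S_{e_n}$", which is a union of components of $X-S_{e_n}$. With that reading, each $A_n$ is exactly the basic open set $\Om(C)$ for these components, and it is clopen. Nestedness follows because a ray with a tail in $V_{n+1}$ has a tail in $V_n\supseteq V_{n+1}$. Once this is checked, compactness of the end space finishes the argument; hyperbolicity is used only through the surjectivity of $\epsilon_X$.
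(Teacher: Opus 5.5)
Your proposal is correct and follows essentially the same route as the paper. End-essentiality makes the nested sets $A_n$ nonempty, and they are clopen by the finite-separator argument of \Cref{lem:capture-continuous}; compactness of $\Omega(X)$ then gives $\omega\in\bigcap_n A_n$, every edge of $T$ is oriented toward $\eta$, and surjectivity of $\epsilon_X$ produces $\xi$. Your explicit check of edges off the ray (using $C_n\subseteq T_b^f$, hence $V_n\subseteq V_b^f$ and $V_n\cap(V_a^f-S_f)=\varnothing$) just spells out the paper's remark that $C_n\subseteq C(e,\eta)$ for large $n$. One small imprecision: $A_n$ is a finite union of basic sets $\Omega(C)$ rather than a single one, which does not affect clopenness.
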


\begin{proof}
Fix $\eta\in\bd T$, represented by a ray $t_0t_1t_2\cdots$.  For every $n$,
let $C_n$ be the component of $T-t_nt_{n+1}$ containing
$t_{n+1},t_{n+2},\ldots$, and define
\[
A_n
:=
\left\{
\omega\in\Om(X):
\text{every ray in $\omega$ has a tail in }
\bigcup_{s\in V(C_n)}\beta(s)
\right\}.
\]
Each $A_n$ is clopen in $\Om(X)$, by the same finite-separator argument used
in \Cref{lem:capture-continuous}.  End-essentiality gives $A_n\neq\varnothing$,
and
\[
A_0\supseteq A_1\supseteq A_2\supseteq\cdots.
\]
Since $\Om(X)$ is compact, choose
$\omega\in\bigcap_nA_n$.

We claim that $\eta$ captures $\omega$.  Let $e$ be any edge of $T$, and let
$C(e,\eta)$ be the component of $T-e$ containing a tail of the ray $\eta$.  For all sufficiently large $n$, $C_n$ is contained in
$C(e,\eta)$.  Since $\omega\in A_n$, the edge $e$ is oriented toward
$C(e,\eta)$.  This holds for every $e$, proving the claim.
It follows from \Cref{lem:epsilon} that, the end $\omega$ contains a geodesic ray and hence
is $\epsilon_X(\xi)$ for some $\xi\in\bd X$.  Then
$\xi\in\bd_T X$ and $\kappa(\xi)=\eta$.
\end{proof}

Combining the preceding lemmas gives the required Borel compatibility.

\begin{prop}
\label{prop:boundary-compatibility}
Let $X$ be a connected locally finite hyperbolic graph and let $(T,\beta)$ be
a bounded tree-decomposition.  Then:
\begin{enumerate}
\item for every $t\in V(T)$, the inclusion $X_t\hookrightarrow X$ induces a Borel isomorphism $\bd X_t\longrightarrow\bd_tX$;
\item the partition \eqref{eq:capture-partition} is a countable Borel
partition;
\item the capture map $\kappa\colon\bd_T X\to\bd T$ is a continuous Borel
injection, its image is Borel in $\bd T$, and it is a Borel isomorphism onto
that image.
\end{enumerate}
If the decomposition is also end-essential, then $\kappa$ is a Borel
isomorphism from $\bd_T X$ onto all of $\bd T$.

If the decomposition is $G$-invariant, all the maps in the statement are
equivariant for the corresponding actions of $G$ and $G_t$.
\end{prop}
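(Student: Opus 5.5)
The plan is to assemble the earlier lemmas, adding at each step only the extra continuity and Borel-measurability facts that are needed. All the spaces involved are Polish. $\bd X$ is compact metrizable because $X$ is a proper hyperbolic graph. $\bd X_t$ is compact metrizable by \Cref{lem:bag-boundary}. $\bd T$ is standard because $T$ is countable.

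For item (1), I will use \Cref{lem:bag-boundary}: under the bounded hypotheses, $\bd\iota_t\colon\bd X_t\to\bd_tX$ is a homeomorphism onto $\bd_tX$. Since $\bd X_t$ is compact, its image $\bd_tX$ is compact, hence closed and in particular Borel in $\bd X$. A homeomorphism between Borel subsets of Polish spaces is a Borel isomorphism, which gives item (1).

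For item (2), $V(T)$ is countable, so the partition \eqref{eq:capture-partition} is countable. Each $\bd_tX$ is closed by item (1). Hence
\[
\bd_TX=\bd X-\bigcup_t\bd_tX
\]
is a complement of a countable union of closed sets, and is therefore Borel. If one prefers to avoid item (1) here, there is a direct alternative. We have $\bd_tX=\epsilon_X^{-1}(W_t)$, where $W_t$ is the set of ends orienting every edge incident with $t$ toward $t$. Then $W_t$ is a countable intersection of the clopen sets $U_{\vec e}$ from the proof of \Cref{lem:capture-continuous}, and $\epsilon_X$ is continuous by \Cref{lem:epsilon}. This route also shows Borelness.

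For item (3), continuity of $\kappa$ is \Cref{lem:capture-continuous}, so $\kappa$ is Borel on the Borel set $\bd_TX$, which is standard Borel as a Borel subset of a Polish space. Condition (4) of \Cref{def:bounded} is exactly the hypothesis of \Cref{lem:capture-injective}, so $\kappa$ is injective. \Cref{lem:lusin-souslin} then gives that $\kappa(\bd_TX)$ is Borel and that $\kappa$ is a Borel isomorphism onto it. In the end-essential case, \Cref{lem:capture-surjective} makes the image all of $\bd T$. For equivariance, $\kappa$ is $G$-equivariant and $g\bd_tX=\bd_{gt}X$ by \Cref{lem:capture-equivariant}. For $h\in G_t$, the identity $\bd\iota_t(h\xi)=h\,\bd\iota_t(\xi)$ holds because $h$ preserves $\beta(t)$, so it maps $X_t$-geodesic rays to $X_t$-geodesic rays and commutes with the inclusion.

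The only delicate point is making sure each domain is a standard Borel space, so that \Cref{lem:lusin-souslin} applies. This is why Borelness of $\bd_TX$ in item (2) must be established before item (3). Everything else is direct citation of the earlier lemmas.
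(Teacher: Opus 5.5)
Your proposal is correct and follows essentially the same route as the paper: \Cref{lem:bag-boundary} for (1), countability of $T$ for (2), \Cref{lem:capture-continuous,lem:capture-injective} together with \Cref{lem:lusin-souslin} for (3), \Cref{lem:capture-surjective} in the end-essential case, and \Cref{lem:capture-equivariant} plus equivariance of the inclusions for the last assertion. The only difference is cosmetic and equally valid: you get Borelness of $\bd_tX$ from compactness of $\bd X_t$ (so its image is closed), whereas the paper gets it from Lusin--Souslin.
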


\begin{proof}
The first assertion is \Cref{lem:bag-boundary}.  In particular, every
$\bd_tX$ is Borel by \Cref{lem:lusin-souslin}.  Since $T$ is countable, the
union of the bag pieces is Borel, and hence its complement $\bd_T X$ is Borel.
The map $\kappa$ is continuous by \Cref{lem:capture-continuous} and injective
by \Cref{lem:capture-injective}.  Its image is Borel and its inverse on the
image is Borel by \Cref{lem:lusin-souslin}.  If the decomposition is
end-essential, surjectivity follows from \Cref{lem:capture-surjective}.
Equivariance follows from \Cref{lem:capture-equivariant} and from equivariance
of the inclusions $X_t\hookrightarrow X$.
\end{proof}

The next observation explains the role of finite edge-orbits.

\begin{lem}
\label{lem:finite-orbit-control}
Let $X$ be a connected locally finite graph, let $G\leq\Aut(X)$ be countable, and let $(T,\beta)$ be a $G$-invariant finite-adhesion tree-decomposition.
Assume that every $X_t$ is connected and that $G\backslash E(T)$ is finite.
Then $(T,\beta)$ is bounded.  
\end{lem}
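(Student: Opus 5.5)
We check the four conditions of \Cref{def:bounded} one at a time. Condition (2), connectedness of every $X_t$, is part of the hypothesis, so only (1), (3) and (4) need arguments.

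\emph{Countability of $T$.} Since $X$ is connected and locally finite, $V(X)$ is countable. Fix edges $e_1,\dots,e_k$ representing the finitely many $G$-orbits on $E(T)$. Because $G$ is countable, $E(T)=\bigcup_i Ge_i$ is countable. As $T$ is a tree, either it is a single node, or every node is an endpoint of some edge. In both cases $V(T)$ is countable.

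\emph{Condition (4).} For $g\in G$ we have $\beta(gt)=g\beta(t)$, so $S_{ge}=gS_e$. Since $g$ is an automorphism of $X$, it follows that $|S_{ge}|=|S_e|$ and $\operatorname{diam}_X(S_{ge})=\operatorname{diam}_X(S_e)$. Thus both quantities take at most $k$ values, one for each representative $e_i$. Take $M=\max_i|S_{e_i}|$, which is finite by finite adhesion. Each $S_{e_i}$ is a finite subset of the connected graph $X$, so its diameter is finite, and we take $D$ to be the maximum of these diameters.

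\emph{Condition (3).} This is the main point, because a node $t$ may have infinitely many incident edges. Each incident edge $ut$ lies in the orbit of some $e_i$. Fix $i$ and an incident edge $u_0t\in Ge_i$. If $ut\in Ge_i$ as well, then $ut=g(u_0t)$ for some $g\in G$.

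There are two cases. If $gt=t$, then $g\in G_t$. Hence $g$ restricts to an automorphism of $X_t$ and carries $\beta(t)\cap\beta(u_0)$ onto $\beta(t)\cap\beta(u)$, so the two sets have the same $X_t$-diameter. If instead $g$ swaps the endpoints, so that $gt=u$ and $gu_0=t$, then $g$ maps $X_{u_0}$ isomorphically onto $X_t$. It again carries the adhesion set $\beta(u_0)\cap\beta(t)$ to $\beta(t)\cap\beta(u)$, and the $X_t$-diameter of the image equals the $X_{u_0}$-diameter of $\beta(t)\cap\beta(u_0)$.

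Consequently, the edges $ut$ incident to $t$ and lying in a single orbit $Ge_i$ give $\operatorname{diam}_{X_t}(\beta(t)\cap\beta(u))$ at most two distinct values. Each of these values is finite, because a finite set in the connected graph $X_t$ or $X_{u_0}$ has finite diameter. Taking the maximum over the $k$ orbits shows that $D_t<\infty$. The only care needed is this orientation issue: an element of $G$ may reverse an edge of $T$ without fixing $t$, and the two-case argument handles it.
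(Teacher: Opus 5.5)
Your proof is correct and follows essentially the same route as the paper. Both arguments fix representatives of the finitely many edge orbits, transport adhesion-set cardinalities and diameters by $G$-invariance, and get countability of $T$ from countability of $G$. The only difference is cosmetic: the paper handles edge inversions by taking the maximum of $\operatorname{diam}_{X_{a_j}}(S_j)$ and $\operatorname{diam}_{X_{b_j}}(S_j)$ over both endpoints of each representative $e_j=a_jb_j$, which gives a bound on $D_t$ uniform in $t$, whereas your two-case argument makes the inversion explicit and yields the per-node finiteness that \Cref{def:bounded} actually requires.
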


\begin{proof}
If $T$ has no edge, it has one vertex and the conclusion is immediate.
Otherwise, choose representatives $e_j=a_jb_j$, $1\leq j\leq m$, for the
finitely many $G$-orbits on $E(T)$, and put
$S_j:=\beta(a_j)\cap\beta(b_j)$.  Since each $S_j$ is finite and both incident
bag graphs are connected, the quantities $M:=\max_j|S_j|$
and
\[
D':=
\max_j
\max\left\{
\operatorname{diam}_{X_{a_j}}(S_j),
\operatorname{diam}_{X_{b_j}}(S_j)
\right\}
\]
are finite.  
By $G$-invariance, for every edge $tu\in E(T)$,
$|\beta(t)\cap\beta(u)|\leq M$, $\operatorname{diam}_{X_t}(\beta(t)\cap\beta(u))\leq D'$.

Thus every local constant $D_t$ is at most $D'$, and the diameter of
every adhesion set is also at most $D'$.  This proves the metric conditions (3) and (4) in
\Cref{def:bounded}.

Since $G$ is countable and $E(T)$ is a finite union of $G$-orbits, $E(T)$ is
countable, and hence so is $T$.  Every vertex of a nontrivial tree is an
endpoint of an edge, so the endpoints of the finitely many representative
edges meet every vertex orbit.  Therefore $G\backslash V(T)$ is finite.
\end{proof}

\section{Combination theorems}

We first state the exact boundary-level criterion for hyperfiniteness in terms of the tree-decomposition. 

\begin{thm}
\label{thm:boundary-transfer}
Let $X$ be a connected locally finite hyperbolic graph, let
$G\leq\Aut(X)$ be countable, and let $(T,\beta)$ be a countable $G$-invariant
finite-adhesion tree-decomposition.  Put $X_t:=X[\beta(t)]$ and
$G_t:=\stab_G(t)$.  Assume that:
\begin{enumerate}
\item every $X_t$ is hyperbolic, and the inclusion $X_t\hookrightarrow X$
induces a $G_t$-equivariant Borel isomorphism $
\bd X_t\longrightarrow\bd_tX$;
\item the capture map $\kappa\colon\bd_T X\longrightarrow\bd T$
is a $G$-equivariant Borel injection.
\end{enumerate}
If $E_G^{\bd T}$ and every $E_{G_t}^{\bd X_t}$ are Borel hyperfinite, then
$E_G^{\bd X}$ is Borel hyperfinite.

Conversely, if $E_G^{\bd X}$ is Borel hyperfinite, then every
$E_{G_t}^{\bd X_t}$ is Borel hyperfinite. If, moreover, $\kappa$ is onto $\bd T$, then $E_G^{\bd T}$ is Borel
hyperfinite as well.
\end{thm}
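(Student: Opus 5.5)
This is a direct application of \Cref{thm:abstract-transfer}, taking
\[
Y:=\bd X,\qquad Y_0:=\bd_T X,\qquad A:=V(T),\qquad Y_t:=\bd_tX,\qquad W:=\bd T,\qquad \phi:=\kappa.
\]
We verify the hypotheses of that theorem in turn.

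First, $G$ acts on $\bd X$ by homeomorphisms, so it acts by Borel automorphisms. The decomposition \eqref{eq:capture-partition} is countable because $T$ is countable.

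Next, each piece $\bd_tX$ is Borel, being the image of the Borel isomorphism in hypothesis (1). Since $T$ is countable, the union of these pieces is Borel, and hence so is its complement $\bd_T X$.

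By \Cref{lem:capture-equivariant}, $\bd_T X$ is $G$-invariant and $g\bd_tX=\bd_{gt}X$. The set $V(T)$ is a countable $G$-set. Hypothesis (2) says that $\kappa$ is a $G$-equivariant Borel injection into the standard Borel $G$-space $\bd T$.

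The one point needing care is that \Cref{thm:abstract-transfer} requires every $Y_t$ to be nonempty. I expect this to be the main (minor) obstacle, and I would handle it by discarding the empty pieces. Let
\[
A':=\{t\in V(T):\bd_tX\neq\varnothing\}.
\]
By the translation identity, $A'$ is $G$-invariant, so it is a $G$-set. Since empty pieces contribute nothing, $\bd X=\bd_T X\sqcup\bigsqcup_{t\in A'}\bd_tX$ is still a partition of the required form.

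For $t\notin A'$, hypothesis (1) gives $\bd X_t=\varnothing$. Then $E_{G_t}^{\bd X_t}$ is trivially Borel hyperfinite, so these nodes play no role in either direction.

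Now pick representatives $t_i$ of the $G$-orbits in $A'$, and set $H_i=G_{t_i}$. The $G_{t_i}$-equivariant Borel isomorphism $\bd X_{t_i}\to\bd_{t_i}X$ from hypothesis (1) identifies $E_{G_{t_i}}^{\bd X_{t_i}}$ with $E_{H_i}^{Y_{t_i}}$. By \Cref{lem:transport}, one is Borel hyperfinite exactly when the other is.

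For an arbitrary node $t=gt_i$, conjugation by $g$ gives $G_t=gG_{t_i}g^{-1}$. The map $x\mapsto gx$ carries $\bd_{t_i}X$ onto $\bd_tX$ and intertwines the two stabilizer actions. Hence hyperfiniteness of $E_{G_t}^{\bd X_t}$ for all $t$ is equivalent to hyperfiniteness for the representatives $t_i$.

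With this in place, the forward implication is exactly the first part of \Cref{thm:abstract-transfer}. For the converse, the second part of that theorem gives hyperfiniteness of every $E_{H_i}^{Y_{t_i}}$, hence of every $E_{G_t}^{\bd X_t}$ by the transfers above. If $\kappa$ is onto, it also gives hyperfiniteness of $E_G^{\bd T}$.
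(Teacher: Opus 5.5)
Your proposal is correct and follows essentially the same route as the paper. Both apply \Cref{thm:abstract-transfer} with $Y=\bd X$, $Y_0=\bd_T X$, $Y_t=\bd_tX$, $W=\bd T$ and $\phi=\kappa$, discard the empty bag pieces, and transport between $\bd X_t$ and $\bd_tX$ via \Cref{lem:transport}; your conjugation argument for passing from orbit representatives to arbitrary nodes just makes explicit a detail the paper leaves implicit.
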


\begin{proof}
By the first assumption and \Cref{lem:lusin-souslin}, every bag piece $\bd_tX$ is Borel.  Since $T$ is
countable, \eqref{eq:capture-partition} is a countable Borel partition.  By
\Cref{lem:capture-equivariant}, the bag pieces are equivariantly permuted and
$\bd_T X$ is invariant.
Next we invoke \Cref{thm:abstract-transfer} with
$Y=\bd X, Y_0=\bd_T X, Y_t=\bd_tX, W=\bd T, \phi=\kappa,$
omitting the empty bag pieces.  The hypothesis that $E_{G_t}^{Y_t}$ is hyperfinite follows from
the given hypothesis on $\bd X_t$ by transport through the boundary map (\Cref{lem:transport}).  This
proves the forward implication.  The hyperfiniteness of $E_{G_t}^{\partial X_t}$ follows from the converse
part of \Cref{thm:abstract-transfer}, without surjectivity of $\kappa$.  If
$\kappa$ is onto, the same theorem also gives hyperfiniteness of $E_G^{\partial T}$, again
transporting between $\bd X_t$ and $\bd_tX$ using \Cref{lem:transport}.
\end{proof}

As a direct corollary of \Cref{thm:boundary-transfer}, we obtain an equivalence of hyperfiniteness of $E_G^{\bd X}$ and hyperfiniteness of $E_G^{\bd T}$ and $E_{G_t}^{\bd X_t}$ for each $t \in V(T)$ under the assumptions of \Cref{thm:boundary-transfer} and surjectivity of $\kappa$.

\begin{cor}
    \label{cor:hyperfiniteness_of_X_iff_hyperfiniteness_of_T_and_bags}
    Let $X$ be a connected locally finite hyperbolic graph, let
$G\leq\Aut(X)$ be countable, and let $(T,\beta)$ be a countable $G$-invariant
finite-adhesion tree-decomposition.  Put $X_t:=X[\beta(t)]$ and
$G_t:=\stab_G(t)$. Assume that:
\begin{enumerate}
\item every $X_t$ is hyperbolic, and the inclusion $X_t\hookrightarrow X$
induces a $G_t$-equivariant Borel isomorphism $
\bd X_t\longrightarrow\bd_tX$;
\item the capture map $\kappa\colon\bd_T X\longrightarrow\bd T$
is a $G$-equivariant Borel bijection. 
\end{enumerate}
Then $E_G^{\bd X}$ is Borel hyperfinite if and only if $E_G^{\bd T}$ is Borel hyperfinite and $E_{G_t}^{\bd X_t}$ is Borel hyperfinite for every $t\in V(T)$.
\end{cor}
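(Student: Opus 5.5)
This corollary is a direct packaging of \Cref{thm:boundary-transfer}, and I would prove the two implications separately.

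\emph{Forward direction.} Assume $E_G^{\bd T}$ and every $E_{G_t}^{\bd X_t}$ are Borel hyperfinite. A Borel bijection is in particular a Borel injection, so hypotheses (1) and (2) of \Cref{thm:boundary-transfer} hold verbatim. Its forward implication then gives that $E_G^{\bd X}$ is Borel hyperfinite.

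\emph{Converse direction.} Assume $E_G^{\bd X}$ is Borel hyperfinite. The converse part of \Cref{thm:boundary-transfer} yields hyperfiniteness of $E_{G_t}^{\bd X_t}$ for every $t$; this part needs no surjectivity. Since $\kappa$ is assumed onto $\bd T$, the final clause of the same theorem gives hyperfiniteness of $E_G^{\bd T}$. Alternatively, one can argue directly. By \Cref{lem:lusin-souslin}, $\kappa$ is a Borel isomorphism $\bd_T X\to\bd T$. Equivariance and bijectivity show that it intertwines the orbit relations. The set $\bd_T X$ is $G$-invariant and Borel, so ${E_G^{\bd X}}_{|\bd_T X}$ is hyperfinite by \Cref{lem:hyperfinite-heredity}. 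Then \Cref{lem:transport} transfers this to $E_G^{\bd T}$.

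\emph{Remaining check.} The one point to verify is that the hyperfiniteness statements for the bags cover \emph{every} $t\in V(T)$, not just orbit representatives. In the forward direction this is immediate, since it is part of the assumption. In the converse direction, \Cref{thm:boundary-transfer} already states the conclusion for every $t$. Equivalently, for $t'=gt$, conjugation by $g$ gives an isomorphism between $E_{G_t}^{\bd X_t}$ and $E_{G_{t'}}^{\bd X_{t'}}$, and \Cref{lem:transport} applies.

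If $\bd_tX$ is empty, then $\bd X_t$ is empty and the corresponding relation is trivially hyperfinite. So there is no real obstacle: the argument is bookkeeping on top of \Cref{thm:boundary-transfer}.
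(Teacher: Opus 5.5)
Your proposal is correct and matches the paper, which presents this corollary as an immediate consequence of \Cref{thm:boundary-transfer}. The forward direction uses that a Borel bijection is a Borel injection, and the converse uses the converse part of that theorem together with surjectivity of $\kappa$; your alternative direct argument for $E_G^{\bd T}$ (restrict to the $G$-invariant Borel set $\bd_T X$, then transport through $\kappa$) is just the relevant step of the proof of \Cref{thm:abstract-transfer} written out, and is also fine.
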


We can now replace the boundary-level assumptions by the concrete metric
conditions of \Cref{def:bounded}.

\begin{cor}
\label{thm:controlled-forward}
Let $X$ be a connected locally finite hyperbolic graph, let
$G\leq\Aut(X)$ be countable, and let $(T,\beta)$ be a $G$-invariant
bounded tree-decomposition.  Put $X_t:=X[\beta(t)]$ and
$G_t:=\stab_G(t)$.  If $E_G^{\bd T}$ and every
$E_{G_t}^{\bd X_t}$ are Borel hyperfinite, then $E_G^{\bd X}$ is Borel
hyperfinite.
\end{cor}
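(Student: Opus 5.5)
The corollary should follow by checking the two hypotheses of \Cref{thm:boundary-transfer} for a $G$-invariant bounded tree-decomposition and then applying its forward implication. First note that a bounded tree-decomposition has, by definition, a countable tree $T$ and finite adhesion. So all the standing assumptions of \Cref{thm:boundary-transfer} hold: $X$ is connected, locally finite and hyperbolic; $G\leq\Aut(X)$ is countable; and $(T,\beta)$ is $G$-invariant.

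For hypothesis (1), fix $t\in V(T)$. By \Cref{def:bounded}, $X_t$ is connected and $D_t<\infty$. Hence \Cref{lem:bag-boundary} shows that $X_t$ is hyperbolic and that the inclusion induces a homeomorphism $\bd X_t\to\bd_tX$. A homeomorphism between Polish spaces is a Borel isomorphism, which is part (1) of \Cref{prop:boundary-compatibility}. For $G_t$-equivariance, observe that every $g\in G_t$ preserves $\beta(t)$ setwise, so it restricts to an automorphism of $X_t$ that commutes with the inclusion. The induced boundary maps therefore commute with the inclusion too, and \Cref{lem:capture-equivariant} confirms that $g\bd_tX=\bd_tX$.

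For hypothesis (2), condition (4) of \Cref{def:bounded} supplies uniform bounds $M,D$ on the cardinality and diameter of the adhesion sets. Then \Cref{lem:capture-injective} gives injectivity of $\kappa$, and \Cref{lem:capture-continuous} gives continuity, hence Borelness. \Cref{lem:capture-equivariant} gives $G$-equivariance. Part (3) of \Cref{prop:boundary-compatibility} already packages these facts. With both hypotheses in place, the forward implication of \Cref{thm:boundary-transfer} turns the assumed hyperfiniteness of $E_G^{\bd T}$ and of every $E_{G_t}^{\bd X_t}$ into hyperfiniteness of $E_G^{\bd X}$.

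The only point needing care is that everything here is stated at the level of $\bd X$, whereas capture is defined through the map $\epsilon_X$ to ends. Equivariance of $\epsilon_X$ is immediate, since automorphisms map geodesic rays to geodesic rays. Beyond that there is no real obstacle: the substantive work lies in the lemmas already proved, and this corollary is essentially bookkeeping.
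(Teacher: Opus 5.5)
Your proposal is correct and follows the paper's proof, which observes that \Cref{prop:boundary-compatibility} supplies both hypotheses of \Cref{thm:boundary-transfer} and then applies its forward implication. You unpack that proposition into the lemmas it is built from (\Cref{lem:bag-boundary}, \Cref{lem:capture-continuous}, \Cref{lem:capture-injective}, \Cref{lem:capture-equivariant}), which is also how the paper proves that proposition.
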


\begin{proof}
The hypotheses of the forward part of \Cref{thm:boundary-transfer} follow from
\Cref{prop:boundary-compatibility}.
\end{proof}

\begin{cor}
\label{thm:controlled-equivalence}
Under the hypotheses of \Cref{thm:controlled-forward}, assume in addition that
$(T,\beta)$ is end-essential.  Then $E_G^{\bd X}$ is Borel hyperfinite if and only if $ E_G^{\bd T}$ is Borel hyperfinite and $E_{G_t}^{\bd X_t}$ is Borel hyperfinite for every $t\in V(T)$.

\end{cor}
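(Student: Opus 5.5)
The corollary should follow by feeding \Cref{prop:boundary-compatibility} into \Cref{cor:hyperfiniteness_of_X_iff_hyperfiniteness_of_T_and_bags}. That corollary has exactly two hypotheses: a statement about each bag, and the requirement that $\kappa$ be a $G$-equivariant Borel bijection onto $\bd T$. So the only task is to verify these two hypotheses for a $G$-invariant bounded, end-essential tree-decomposition.

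For the first hypothesis, fix $t\in V(T)$. Since $(T,\beta)$ is bounded, $X_t$ is connected and $D_t<\infty$. Then \Cref{lem:bag-boundary} shows that $X_t$ is hyperbolic and that $\bd\iota_t\colon\bd X_t\to\bd_tX$ is a homeomorphism, hence a Borel isomorphism. This map is $G_t$-equivariant: for $g\in G_t$ we have $g\beta(t)=\beta(t)$, so $g$ restricts to an automorphism of $X_t$ commuting with the inclusion. Together with $g\bd_tX=\bd_tX$ from \Cref{lem:capture-equivariant}, this is the equivariance statement at the end of \Cref{prop:boundary-compatibility}.

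For the second hypothesis, boundedness gives the uniform bounds $|S_e|\le M$ and $\operatorname{diam}_X(S_e)\le D$. Hence $\kappa$ is continuous and injective by \Cref{lem:capture-continuous,lem:capture-injective}. End-essentiality makes $\kappa$ surjective by \Cref{lem:capture-surjective}. So $\kappa\colon\bd_TX\to\bd T$ is a Borel bijection, and it is a Borel isomorphism by \Cref{lem:lusin-souslin}; this is the end-essential clause of \Cref{prop:boundary-compatibility}. It is $G$-equivariant by \Cref{lem:capture-equivariant}. The tree $T$ is countable by condition (1) of \Cref{def:bounded}, which is the countability required by the corollary.

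With both hypotheses in place, \Cref{cor:hyperfiniteness_of_X_iff_hyperfiniteness_of_T_and_bags} gives the equivalence directly. One could instead cite \Cref{thm:controlled-forward} for the forward direction and the converse half of \Cref{thm:boundary-transfer} for the reverse, but the single corollary handles both. There is no real obstacle. The only point to check is that empty bag pieces $\bd_tX=\varnothing$ are harmless. When $\bd_tX$ is empty, $\bd X_t$ is empty too, so $E_{G_t}^{\bd X_t}$ is trivially hyperfinite. The proof of \Cref{thm:boundary-transfer} omits these pieces when applying \Cref{thm:abstract-transfer}, since the latter requires nonempty $Y_a$. So the "for every $t$" in the statement costs nothing.
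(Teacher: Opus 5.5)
Your proposal is correct and follows the paper's proof. The paper likewise uses \Cref{prop:boundary-compatibility} to obtain the bag isomorphisms and a $G$-equivariant Borel isomorphism $\kappa\colon\bd_TX\to\bd T$, and then applies the equivalence part of \Cref{thm:boundary-transfer}, which is exactly \Cref{cor:hyperfiniteness_of_X_iff_hyperfiniteness_of_T_and_bags}; your remarks on the countability of $T$ and on empty bag pieces are bookkeeping the paper leaves implicit.
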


\begin{proof}
By \Cref{prop:boundary-compatibility}, the capture map is now a
$G$-equivariant Borel isomorphism from $\bd_T X$ onto $\bd T$.  Apply the
equivalence part of \Cref{thm:boundary-transfer}.
\end{proof}

We are now ready to prove our main theorem.

\begin{thm}
\label{thm:main}
Let $X$ be a connected locally finite hyperbolic graph, let
$G\leq\Aut(X)$ be countable, and let $(T,\beta)$ be a $G$-invariant
tree-decomposition satisfying:
\begin{enumerate}
\item every bag $\beta(t)$ induces a connected subgraph $X_t$ of $X$;
\item $(T,\beta)$ has finite adhesion;
\item $G\backslash E(T)$ is finite.
\end{enumerate}
For $t\in V(T)$, put $G_t:=\stab_G(t)$.
If $E_G^{\bd T}$ and every $E_{G_t}^{\bd X_t}$ are Borel hyperfinite, then $E_G^{\bd X}$ is Borel hyperfinite.  Conversely, hyperfiniteness of $E_G^{\bd X}$ always implies hyperfiniteness of every $E_{G_t}^{\bd X_t}$.
In addition, if $(T, \beta)$ is end-essential, then
$E_G^{\bd X}$  is Borel hyperfinite if and only if $E_G^{\bd T}$ is Borel hyperfinite and $E_{G_t}^{\bd X_t}$ is Borel hyperfinite for every $t\in V(T)$.

\end{thm}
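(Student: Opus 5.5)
The plan is to reduce \Cref{thm:main} to the corollaries already established, with \Cref{lem:finite-orbit-control} as the bridge. Hypotheses (1)--(3) of \Cref{thm:main} are exactly those of \Cref{lem:finite-orbit-control}: $G\le\Aut(X)$ is countable, $(T,\beta)$ is $G$-invariant with finite adhesion, every $X_t$ is connected, and $G\backslash E(T)$ is finite. That lemma therefore says $(T,\beta)$ is bounded in the sense of \Cref{def:bounded}. In particular $T$ is countable, there are uniform bounds $|S_e|\le M$ and $\operatorname{diam}_X(S_e)\le D$, and each local constant $D_t$ is finite.

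For the forward implication, I apply \Cref{thm:controlled-forward} directly to this $G$-invariant bounded tree-decomposition. It yields that $E_G^{\bd X}$ is Borel hyperfinite whenever $E_G^{\bd T}$ and all $E_{G_t}^{\bd X_t}$ are.

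For the unconditional converse, I verify the hypotheses of \Cref{thm:boundary-transfer} using \Cref{prop:boundary-compatibility}.
\begin{enumerate}
\item Each $X_t$ is hyperbolic, by \Cref{lem:bag-boundary}.
\item The inclusion induces a $G_t$-equivariant Borel isomorphism $\bd X_t\to\bd_tX$.
\item The capture map $\kappa$ is a $G$-equivariant Borel injection.
\end{enumerate}
The converse half of \Cref{thm:boundary-transfer}, which does not use surjectivity of $\kappa$, then shows that hyperfiniteness of $E_G^{\bd X}$ implies hyperfiniteness of every $E_{G_t}^{\bd X_t}$. The same argument covers every node $t$, not only orbit representatives. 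This is because \Cref{thm:boundary-transfer} is stated for all $t$, or alternatively because $E_{G_t}^{\bd X_t}$ is a subrelation of the restriction of $E_G^{\bd X}$ to $\bd_tX$, after transport via \Cref{lem:transport} and \Cref{lem:hyperfinite-heredity}.

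For the end-essential case, I invoke \Cref{thm:controlled-equivalence}. It uses \Cref{lem:capture-surjective} to upgrade $\kappa$ to a Borel isomorphism onto all of $\bd T$, and the stated equivalence follows.

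I do not expect a genuine obstacle, since every step is a citation. The only point to check carefully is that the hypotheses of \Cref{thm:main} really match those of \Cref{lem:finite-orbit-control}. The countability of $T$, which is needed for the partition \eqref{eq:capture-partition} to be a countable Borel partition, is supplied by that lemma rather than assumed.
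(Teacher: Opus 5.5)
Your proposal is correct and follows the paper's proof step for step: boundedness comes from \Cref{lem:finite-orbit-control}, the forward direction from \Cref{thm:controlled-forward}, the unconditional converse from \Cref{prop:boundary-compatibility} together with the converse half of \Cref{thm:boundary-transfer}, and the end-essential equivalence from \Cref{thm:controlled-equivalence}. Your remark that countability of $T$ comes from \Cref{lem:finite-orbit-control} rather than being assumed is the same point the paper's proof relies on implicitly.
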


\begin{proof}
By \Cref{lem:finite-orbit-control}, the decomposition is bounded.
The forward implication follows from \Cref{thm:controlled-forward}.  The hyperfiniteness of $E_{G_t}^{\partial X_t}$ follows from \Cref{prop:boundary-compatibility} and the converse part
of \Cref{thm:boundary-transfer}.
The additional hypothesis says precisely that the decomposition is
end-essential, so the full equivalence follows from
\Cref{thm:controlled-equivalence}.
\end{proof}


Recall that an action of a group $G$ on a metric space $X$ is \defin{acylindrical} if for every $\varepsilon > 0$, there exists $R > 0$ and $N \in \N$ such that for all $x,y \in X$ with $d(x,y) > R$, the set 

$$ \{g \in G : d(x,gx) \leq \varepsilon \text{ and } d(y, gy) \leq \varepsilon\}$$

has cardinality at most $N$.




Our main result recovers the previously known cases while also yielding new ones. 
Indeed, suppose that a group $G$ acts properly and cocompactly on a locally finite hyperbolic graph $X$. 
By the Milnor--\v{S}varc lemma \cite[Proposition~I.8.19]{BH99}, the group $G$ is hyperbolic, the orbit map $G\to X$ is a quasi-isometry, and hence the Gromov boundary $\partial G$ is $G$-equivariantly homeomorphic to $\partial X$. It therefore follows from \cite{MarquisSabok2020} that $E_G^{\partial X}$ is hyperfinite. 
Likewise, if $X$ is a tree and the action $G\curvearrowright X$ is acylindrical, then the hyperfiniteness of $E_G^{\partial X}$ follows from \cite{KunnawalkamOyakawaShinkoSpaas2024}. 

The example below satisfies neither of these sets of hypotheses and therefore lies beyond the scope of the previously known hyperfiniteness results.

\begin{exa}
\label{prop:nonacylindrical-example}
There exists a finitely generated non-hyperbolic group $G$ acting on a
connected locally finite hyperbolic graph $X$ having a $G$-invariant tree-decomposition $(T, \beta)$ such that the action
$G\curvearrowright X$ is not acylindrical, the kernel of the
action on $T$ acts nontrivially on $\bd X$, and
$E_G^{\bd X}$ is Borel hyperfinite.
\end{exa}

\begin{proof}
Let $F_2=\langle a,b\rangle$ and let $T_0:=\operatorname{Cay}(F_2,\{a,b\})$, which is a regular tree.
We choose a connected locally finite hyperbolic graph $Y$ with a
distinguished vertex $o_Y$; for example, $Y$ may be a Cayley graph of a
closed hyperbolic surface group.
Let $R$ be the rooted binary tree, whose vertices are the finite binary
words.  
Let $u\in\Aut(R)$ be the binary odometer, recursively defined by
$u(\varnothing)=\varnothing,u(0w)=1w,u(1w)=0u(w)$.
Then $u$ has infinite order.  
Put $K:=\langle u\rangle\cong\mathbb Z$.
Identify the root of $R$ with $o_Y$, obtaining $Z:=Y\vee R$,
and extend the action of $K$ to $Z$ by letting $K$ fix $Y$ pointwise.
For every $q\in F_2$, let $Z_q$ be a copy of $Z$, with wedge point $o_q$.
For every edge $qq'\in E(T_0)$, add an edge joining $o_q$ to $o_{q'}$.
Denote the resulting graph by $X$.  The group
$G:=F_2\times K$ acts on $X$ by $(h,u^n)\cdot z_q=(u^nz)_{hq}$.

The graph $X$ is connected and locally finite.  It is also hyperbolic.
Indeed, $Z$ is hyperbolic, and each copy $Z_q$ meets the remainder of
$X$ only at $o_q$.  A geodesic joining points in distinct copies is the
concatenation of geodesics to the corresponding wedge points, the
unique path between those wedge points in $T_0$, and geodesics inside
the terminal copies.  Hence every geodesic triangle is assembled from
a tripod in $T_0$ and geodesic triangles inside copies of $Z$, giving a
uniform hyperbolicity constant.

Let $T$ be the barycentric subdivision of $T_0$.  For an original
vertex $q\in V(T_0)$, define $\beta(q):=V(Z_q)$.
For the subdivision vertex $m_e$ corresponding to an edge
$e=qq'\in E(T_0)$, define $\beta(m_e):=\{o_q,o_{q'}\}$.
Then $(T,\beta)$ is a $G$-invariant tree-decomposition of $X$, where $G$ acts on $T$ via the natural action of $Q$ on $T$ and letting $K$ act trivially on $T$.  
Every bag induces a connected subgraph, every adhesion set is a singleton,
and $G\backslash E(T)$ is finite.
For every original node $q$, one has $\stab_G(q)=K\cong\mathbb Z$.
Thus the orbit equivalence relation induced by
$\stab_G(q)$ on $\bd Z_q$ is Borel hyperfinite.  The bags corresponding
to subdivision vertices are finite and therefore have empty boundary.
Moreover, the action of $G$ on $\bd T$ factors through the standard
action of $Q=\mathbb F_2$ on the boundary of its Cayley tree, and hence
$E_G^{\bd T}$ is Borel hyperfinite.  Theorem~\ref{thm:main} now gives
that $E_G^{\bd X}$ is Borel hyperfinite.

Finally, $G$ is not hyperbolic, since it contains
$\langle a\rangle\times K\cong\mathbb Z^2$.  The action on $X$ is not
acylindrical because the infinite subgroup $K$ fixes every wedge point
$o_q$, and these wedge points contain pairs at arbitrarily large
distance.  On the other hand, the kernel $K$ of the action of $G$ on $T$ acts nontrivially on $\partial X$ because it acts nontrivially on the copy of
$\bd R$ contained in $\bd Z_q\subseteq\bd X$.  
\end{proof}

\section{Applications}

In this section, we discuss applications of our results to accessible graphs.
A connected locally finite graph $X$ is \defin{vertex-accessible} if there is
$k\in\mathbb N$ such that every two distinct ends of $X$ can be separated by at most $k$ vertices, i.e.\ there exists a set $F \subseteq V(X)$ with $\vert F \vert \leq k$ such that there are rays in the two given ends with tails lying in different components of $X - F$.
The following is the only place where cocompactness is used to obtain a
decomposition.

\begin{lem}
\label{lem:accessibility-decomposition}
Let $X$ be a connected locally finite vertex-accessible graph, and let a
countable group $G\leq\Aut(X)$ act cocompactly on $X$.  Then $X$ admits
a $G$-invariant tree-decomposition $(T,\beta)$ such that:
\begin{enumerate}
\item every bag induces a connected subgraph of $X$;
\item the decomposition has finite adhesion;
\item $G\backslash E(T)$ is finite;
\item the decomposition is end-essential.
\end{enumerate}
\end{lem}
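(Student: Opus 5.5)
We plan to take the decomposition from the accessibility literature and then check the four properties. For a connected locally finite quasi-transitive graph that is vertex-accessible, the work of Thomassen--Woess, Hamann, and Carmesin--Hamann--Miraftab yields a canonical (hence $\Aut(X)$-invariant, so in particular $G$-invariant) tree-decomposition $(T,\beta)$ of finite adhesion that distinguishes all ends of $X$ efficiently, together with finitely many orbits of adhesion sets. Concretely, we start from a nested $\Aut(X)$-invariant set $N$ of finite-order separations (say $k$-separations with $k$ the accessibility constant) that distinguishes every pair of ends, and in which each separation is \emph{tight} and has both sides containing an end. Standard nestedness arguments then turn $N$ into a tree-decomposition whose edges correspond bijectively to the separations in $N$. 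The invariance of $N$ gives $G$-invariance of $(T,\beta)$, and the adhesion sets are the separators, so property (2) holds.

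Property (4) follows directly from this choice. Every edge $e=uv$ corresponds to a separation $(A,B)$ with $V_u^e=A$ and $V_v^e=B$, and we arranged that each side contains an end; that end has a tail of every ray in that side. Property (3) comes from cocompactness. A tight separation of order at most $k$ with connected components adjacent to the separator is determined, up to finitely many choices, by its separator. By local finiteness and tightness, a tight separator of bounded order has bounded diameter, so up to the $G$-action only finitely many separators meet a fixed finite set of orbit representatives. Hence $N$, and therefore $E(T)$, has finitely many $G$-orbits. This is the standard argument of Thomassen--Woess for showing $N$ is $G$-finite.

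The main obstacle is property (1), connectedness of the bags. Bags of the decomposition built from $N$ need not induce connected subgraphs, since a bag may consist of several pieces glued only through separators. Our approach is to modify the construction while preserving the other properties. We would first thicken each separator equivariantly by adding a geodesic path between any two of its vertices; since separator diameters are bounded and there are finitely many orbits, this keeps the adhesion finite. We then add these path vertices to every bag along the relevant tree path, which keeps the subtrees $T_x$ connected. Alternatively, we could use the result of Carmesin--Hamann--Miraftab that canonical decompositions can be chosen with connected bags in the quasi-transitive setting. After thickening, one checks that each bag is connected: any two of its vertices are joined in $X$, and each excursion leaving the bag returns through a single adhesion set, which the added paths now connect inside the bag, as in the proof of \Cref{lem:bag-boundary}. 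Finally, we verify that the thickening preserves end-essentiality, since sides only grow by finitely many vertices, and preserves finiteness of edge orbits.
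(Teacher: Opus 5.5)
This is essentially the paper's argument. The paper takes the end-distinguishing decomposition with finitely many edge-orbits from \cite[Theorem~6.4]{HamannLehnerMiraftabRuehmann}, contracts the edges that do not separate ends (your restriction to separations with an end on each side), and enlarges the bags equivariantly on the same tree via \cite[Proposition~4.1 and Lemma~4.2]{HamannLehnerMiraftabRuehmann} (your geodesic thickening), noting as you do that sides only grow, so end-essentiality survives.

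Two of your justifications need tightening. First, bounded separator diameter does not follow from local finiteness and tightness alone; it comes from the Thomassen--Woess finiteness of tight separators of order at most $k$ through a fixed vertex, combined with cocompactness. Second, in the connectivity check the new adhesion sets also contain vertices pushed through from other separators' geodesics, and these must be joined to the old separator inside the bag along their own geodesic, not by the paths added for that edge.
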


\begin{proof}
If $X$ has at most one end, take the one-bag decomposition.  Suppose that $X$
has at least two ends.  By the accessibility decomposition theorem
\cite[Theorem~6.4]{HamannLehnerMiraftabRuehmann}, there is a $G$-invariant finite-adhesion
tree-decomposition that distinguishes all ends
efficiently (where by \emph{efficiently}, we mean that the adhesion sets have the minimum cardinality needed to distinguish the ends) and has finitely many $G$-orbits on $E(T)$.  In the construction,
every edge whose adhesion set does not efficiently separate a pair of ends is
contracted.  Thus the decomposition may be chosen so that every remaining
edge distinguishes two ends, which is assertion~(4).

By \cite[Proposition~4.1 and Lemma~4.2]
{HamannLehnerMiraftabRuehmann}, the bags may be enlarged equivariantly so that
they induce connected subgraphs.  The tree is unchanged, the new adhesion
sets remain finite, and each original bag is contained in the corresponding
enlarged bag.  Consequently,
an end living on one side of an edge before the enlargement still lives on
that side afterward.  Thus the two ends distinguished by any edge remain on
opposite sides.  Hence all four assertions hold.
\end{proof}

\begin{cor}
\label{cor:hyperbolic-decomposition}
Let $X$ be a connected locally finite hyperbolic graph, and let a countable
group $G\leq\Aut(X)$ act cocompactly on $X$.  Then $X$ admits a
decomposition satisfying all the hypotheses of the equivalence part of
\Cref{thm:main}.
\end{cor}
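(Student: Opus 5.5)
The plan is to reduce the corollary to \Cref{lem:accessibility-decomposition}, after checking that $X$ is vertex-accessible. I would proceed in two steps.

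\textbf{Step 1: accessibility.} Cocompact connected locally finite hyperbolic graphs are accessible by \cite{HamannAccessibility}. If the accessibility notion proved there is edge-accessibility, I would convert it to vertex-accessibility: a separating set of at most $k$ edges gives a separating set of at most $k$ vertices by taking one endpoint of each edge. If $X$ has at most one end, vertex-accessibility holds vacuously; in that case the one-bag decomposition works anyway, since $T$ has no edges and every condition is trivial.

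\textbf{Step 2: the decomposition.} Applying \Cref{lem:accessibility-decomposition} with the cocompact action of $G$ gives a $G$-invariant tree-decomposition $(T,\beta)$ with the following properties:
\begin{enumerate}
\item the bags induce connected subgraphs;
\item the adhesion is finite;
\item $G\backslash E(T)$ is finite;
\item the decomposition is end-essential.
\end{enumerate}
These are exactly the hypotheses of \Cref{thm:main} together with end-essentiality, which is the extra hypothesis needed for the equivalence part. Countability of $T$ is not a separate hypothesis: it follows automatically via \Cref{lem:finite-orbit-control}, since $G$ is countable and $E(T)$ is a finite union of $G$-orbits.

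\textbf{Main obstacle.} The only delicate point is the citation in Step 1. I need the vertex-accessibility statement exactly as \Cref{lem:accessibility-decomposition} uses it: a uniform bound $k$ on the number of vertices needed to separate any two distinct ends. My first move would be to rely on \cite{HamannAccessibility} as quoted in the introduction. Failing that, I would sketch the standard argument: cocompactness gives finitely many orbits of minimal end-separators, and hyperbolicity bounds their size. Everything else is bookkeeping.
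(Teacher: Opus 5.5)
Your proposal is correct and is essentially the paper's proof: the paper cites \cite[Theorem~4.3]{HamannAccessibility} for vertex-accessibility of cocompact locally finite hyperbolic graphs and then applies \Cref{lem:accessibility-decomposition}, exactly as in your Steps 1 and 2. Your fallback sketch in the last paragraph would not work as stated. Cocompactness alone does not give finitely many orbits of minimal end-separators, since inaccessible cocompact graphs exist, and a uniform bound on their size is the nontrivial content of Hamann's theorem, which he derives via the cycle space. Your main argument does not depend on that sketch.
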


\begin{proof}
By \cite[Theorem~4.3]{HamannAccessibility}, every locally finite
cocompact hyperbolic graph is vertex-accessible.  Apply
\Cref{lem:accessibility-decomposition}.
\end{proof}

\begin{cor}
\label{cor:quasitransitive-reduction}
Let $X$ and $G$ be as in \Cref{cor:hyperbolic-decomposition}.  For any
decomposition as in \Cref{lem:accessibility-decomposition}, with
$X_t:=X[\beta(t)]$ and $G_t:=\stab_G(t)$, one has that $E_G^{\bd X}$ is Borel hyperfinite if and only if $E_G^{\bd T}$ is Borel hyperfinite and 
$E_{G_t}^{\bd X_t}$ is Borel hyperfinite for every $t\in V(T)$.
\end{cor}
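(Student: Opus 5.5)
This corollary is a direct combination of earlier results, so I would not reprove anything. Fix a decomposition $(T,\beta)$ as produced by \Cref{lem:accessibility-decomposition} for $X$ and $G$. By \Cref{cor:hyperbolic-decomposition} such decompositions exist, though the statement concerns any one of them. That lemma gives four properties: $(T,\beta)$ is $G$-invariant, every bag induces a connected subgraph $X_t$, the adhesion is finite, $G\backslash E(T)$ is finite, and the decomposition is end-essential. These are exactly hypotheses (1)--(3) of \Cref{thm:main} together with the additional end-essentiality assumption. Since $X$ is connected, locally finite and hyperbolic, and $G$ is countable, the equivalence part of \Cref{thm:main} applies. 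It yields that $E_G^{\bd X}$ is Borel hyperfinite if and only if $E_G^{\bd T}$ and every $E_{G_t}^{\bd X_t}$ are Borel hyperfinite.

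The one point to check is that the notion of end-essential obtained in \Cref{lem:accessibility-decomposition} agrees with \Cref{def:end-essential}. In the lemma, end-essentiality comes from each remaining edge of $T$ distinguishing two ends of $X$. One of these ends lives on each side, so each of $V_u^e$ and $V_v^e$ contains a tail of every ray of some end. This is precisely \Cref{def:end-essential}.

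I would also note where the hypotheses are used inside \Cref{thm:main}. \Cref{lem:finite-orbit-control} makes the decomposition bounded, and the trivial one-bag case (when $X$ has at most one end) is covered as well: then $T$ is a single node, $\bd T=\varnothing$, and the statement reduces to $G_t=G$ and $X_t=X$. There is no real obstacle here; the only care needed is this bookkeeping of hypotheses.
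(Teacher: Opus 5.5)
Your proposal is correct and follows the paper's own route: the paper gives no separate proof because the corollary is just the equivalence part of Theorem~\ref{thm:main}, applied to a decomposition satisfying (1)--(4) of Lemma~\ref{lem:accessibility-decomposition}, with hyperbolicity and countability coming from the hypotheses of Corollary~\ref{cor:hyperbolic-decomposition}. Your extra checks (that end-essentiality matches Definition~\ref{def:end-essential}, and that the one-bag case with $\bd T=\varnothing$ is covered) are harmless bookkeeping.
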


\section{Further questions}

\citet{NaryshkinVaccaro2025} strengthened the Marquis--Sabok theorem \cite{MarquisSabok2020} by
proving finite Borel asymptotic dimension for hyperbolic-group boundary
actions.  Recall that the Borel asymptotic dimension of an action is defined
using a proper right-invariant metric on the acting group and is independent of
the choice of such a metric; see \cite[Lemma~2.2]{Borel_asym_dim_2023}.  It is
natural to ask whether the decomposition method above also preserves this
stronger invariant.

\begin{ques}
Let $X$ be a connected locally finite hyperbolic graph, let
$G\leq\Aut(X)$ be countable, and let $(T,\beta)$ be a $G$-invariant
bounded end-essential tree-decomposition.  Put
$X_t:=X[\beta(t)]$ and $G_t:=\stab_G(t)$.  Suppose
$\operatorname{asdim}_{\mathrm B}(G\curvearrowright\bd T)<\infty$ and $\sup_{t\in V(T)}
\operatorname{asdim}_{\mathrm B}(G_t\curvearrowright\bd X_t)<\infty$.
Must $\operatorname{asdim}_{\mathrm B}(G\curvearrowright\bd X)<\infty?$
\end{ques}

The induction step already raises a separate problem.

\begin{ques}
Let $H\leq G$ be countable groups and let $Z$ be a standard Borel $H$-space.
Suppose $\operatorname{asdim}_{\mathrm B}(H\curvearrowright Z)\leq d$ and $\operatorname{asdim}_{\mathrm B}(G\curvearrowright G/H)\leq q$.
Under what additional hypotheses does one have
\[
\operatorname{asdim}_{\mathrm B}
\bigl(G\curvearrowright\Ind_H^G(Z)\bigr)
\leq d+q?
\]
In particular, when does finite Borel asymptotic dimension pass from the
$H$-action on $Z$ and the $G$-action on $G/H$ to the induced action?
\end{ques}

\section*{Acknowledgments}

We are grateful to Joseph MacManus, Koichi Oyakawa, Antoine Poulin and Marcin Sabok for carefully reading previous drafts of the paper and offering many helpful comments. This research was conducted while the first author was visiting Carleton
University.  This research was supported by NSERC.

\bibliographystyle{plainurlnat}
\bibliography{MPF}

\end{document}